\newtheorem{theorem}{Theorem}[section]
\newtheorem*{theorem*}{Theorem}
\newtheorem{lemma}[theorem]{Lemma}
\newtheorem{proposition}[theorem]{Proposition} 
\newtheorem{corollary}[theorem]{Corollary} 
\theoremstyle{definition}
\newtheorem*{definition*}{Definition}
\newtheorem*{ques*}{Question}
\newtheorem{assertion}{Assertion}
\theoremstyle{remark}
\newtheorem{remark}{Remark}
\numberwithin{equation}{section}
\newcommand{\bZ}{\mathbb{Z}}
\newcommand{\bC}{\mathbb{C}}
\newcommand{\bR}{\mathbb{R}}
\newcommand{\bQ}{\mathbb{Q}}
\newcommand{\bF}{\mathbb{F}}
\newcommand{\fH}{\mathfrak{H}}
\newcommand{\bH}{\mathbb{H}}
\newcommand{\II}{\mbox{I\hspace{-1pt}I}}
\newcommand{\Ker}{{\rm Ker}\,}
\newcommand{\Iso}{{\rm Iso}}
\newcommand{\Ind}{{\rm Ind}}
\newcommand{\gr}[1]{\langle #1 \rangle}
\newcommand{\wt}{\widetilde}
\newcommand{\Aut}{{\rm Aut}}
\newcommand{\GL}{{\rm GL}}
\newcommand{\rank}{{\rm rank}}
\newcommand{\orth}{{\rm O}}
\newcommand{\Ga}{\Gamma}
\newcommand{\ga}{\gamma}
\newcommand{\al}{\alpha}
\newcommand{\sig}{\sigma}
\newcommand{\ol}{\overline}
\title{Determination of compact Lie groups with the Borsuk-Ulam property} 
\author{Ikumitsu Nagasaki}
\date{July 28, 2021}
\begin{document}
\maketitle

\begin{abstract}
 A compact Lie group $G$ is said to have the Borsuk-Ulam property if the Borsuk-Ulam theorem holds for $G$-maps between representation spheres. It is well-known that an elementary abelian $p$-group $C_p{}^n$ ($p$ any prime) and  an $n$-torus $T^n$, $n\ge 0$, have the Borsuk-Ulam property.
In this paper, we shall discuss the classical question of which compact Lie groups have the Borsuk-Ulam property and in particular we shall show that every extension group of an $n$-torus by a cyclic group of prime order does not have the Borsuk-Ulam property. This leads us that the only compact Lie groups with the Borsuk-Ulam property are $C_p^n$ and $T^n$, which is a final answer to the question. 
\end{abstract}


\section{Introduction} 
\label{intro}
The classical Borsuk-Ulam theorem \cite{Bor} has been generalised from various aspects. From the viewpoint of equivariant topology or transformation group theory, the following theorem is well-known as a generalisation of the Borsuk-Ulam theorem, see for example \cite{Bi}, \cite{FH}, \cite{M2}, etc.

\begin{theorem*}[Borsuk-Ulam type theorem]
Let $\Gamma$ be an elementary abelian $p$-group $C_p{}^n$ ($p$ any prime) or an $n$-torus $T^n$. For any fixed-point-free orthogonal $\Gamma$-representations $V$ and $W$, i.e., 
$V^\Ga =W^\Ga =0$, if there exists a $\Gamma$-map $f: S(V) \to S(W)$ between representation spheres, then the inequality $\dim V \le \dim W$ holds.  
\end{theorem*}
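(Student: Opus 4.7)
The plan is to prove the inequality by the standard Borel equivariant cohomology argument. I will work with coefficients $k = \bF_p$ when $\Ga = C_p{}^n$, and $k = \bQ$ when $\Ga = T^n$. In either case the Borel ring $R := H^*_\Ga(\text{pt}; k)$ contains a polynomial subring $P$ in $n$ generators (of degree $1$ for $p = 2$, of degree $2$ for odd $p$ or for $T^n$), and the equivariant Euler class of each nontrivial real irreducible representation of $\Ga$ is a nonzero linear form in the generators of $P$. Consequently, for any fixed-point-free representation $V$, its Euler class $e(V) \in R^{\dim V}$ decomposes as a product of such linear forms, lies in $P$, and is in particular nonzero because $P$ is an integral domain.

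Next, I apply the Gysin sequence of the Borel sphere bundle $S(V) \times_\Ga E\Ga \to B\Ga$,
\[
\cdots \to H^{k-\dim V}_\Ga(\text{pt}) \xrightarrow{\;\cup\, e(V)\;} H^k_\Ga(\text{pt}) \xrightarrow{\;p_V^*\;} H^k_\Ga(S(V)) \to \cdots,
\]
and the analogous one for $W$. For $k < \dim V$ the left-hand term vanishes by degree reasons, so $p_V^*$ is injective in that range. Applying exactness in the sequence for $W$ at degree $\dim W$ shows $p_W^*(e(W)) = 0$, since $e(W)$ is the image of $1 \in H^0_\Ga(\text{pt})$ under $\cup\, e(W)$.

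Finally, given a $\Ga$-map $f \colon S(V) \to S(W)$, the relation $p_V = p_W \circ f$ yields $p_V^* = f^* \circ p_W^*$, so
\[
p_V^*(e(W)) = f^*\bigl(p_W^*(e(W))\bigr) = 0
\]
in $H^{\dim W}_\Ga(S(V))$. Were $\dim W < \dim V$, the injectivity of $p_V^*$ in degree $\dim W$ would force $e(W) = 0$, contradicting the nonvanishing established in the first paragraph. Hence $\dim V \le \dim W$. The main obstacle is precisely the algebraic input of the first paragraph: that $R$ contains a polynomial subring inside which Euler classes of nontrivial irreducibles are nonzero. This is the special feature of $C_p{}^n$ and $T^n$, and its failure is exactly what the rest of the paper must exploit in order to construct Borsuk-Ulam-violating maps for all other compact Lie groups.
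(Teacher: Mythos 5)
Your argument is correct, but note that the paper itself gives no proof of this statement: it is quoted as a known background theorem with references to Biasi--de Mattos, Fadell--Husseini and Marzantowicz, and the paper's actual content is the converse direction (constructing counterexamples for all other groups). What you have written is essentially the standard proof from those references: the equivariant Euler class of a fixed-point-free representation of $C_p{}^n$ or $T^n$ is a product of nonzero linear forms in a polynomial subring of $H^*(B\Gamma;k)$ (the degree-$1$ generators for $p=2$ with $k=\mathbb{F}_2$, the degree-$2$ polynomial part for odd $p$ with $k=\mathbb{F}_p$, and $H^*(BT^n;\mathbb{Q})$ for the torus), hence nonzero since that subring is a domain; the Gysin sequence then kills $e(W)$ on $S(W)$ and forces $e(W)=0$ if $\dim W<\dim V$, a contradiction. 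The steps that need care are all handled: you place the Euler classes inside the integral domain $P$ rather than the full (non-domain, for odd $p$) cohomology ring, you use $\mathbb{F}_2$ coefficients where orientability fails, and for odd $p$ and $T^n$ the nontrivial irreducibles carry complex structures so the Gysin sequence applies. Your closing remark is also accurate: the failure of this Euler-class nonvanishing for other compact Lie groups is exactly the phenomenon the rest of the paper exploits (e.g.\ in Lemma 4.4, where $e_G(V_{0,1})=0$ over $\mathbb{Q}$ for $G=S^1\times C_p$).
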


We say that a compact Lie group $G$ has the Borsuk-Ulam property if such a Borsuk-Ulam type result holds, and such $G$ is called a \emph{BU-group of type I}  according to \cite{N1}, that is,  
we call $G$ a BU-group of type I if the following property is satisfied: 
for any fixed-point-free orthogonal $G$-representations $V$ and $W$, if there exists a $G$-map $f: S(V) \to S(W)$ between representation spheres, then the inequality $\dim V \le \dim W$ holds. 
We also call $G$ a \emph{BU-group of type II} if the following property is satisfied:  
for any fixed-point-free orthogonal $G$-representations $V$ and $W$ with
the same dimension, if there exists a $G$-map $f : S(V) \to S(W)$, then the degree of $f$ is $\deg f\ne 0$.
It is also known that tori and elementary abelian $p$-groups are BU-groups of type \II, see \cite[Theorems 1 and 2]{M0}.
It is natural and interesting to ask the following question:
\begin{ques*}
Which compact Lie groups have the Borsuk-Ulam property? 
\end{ques*}
In previous studies of $G$-maps between spheres, several counterexamples of the Borsuk-Ulam property were sporadically found out; for example, $C_{pq}$, where $p$, $q$ are relatively prime positive integers, is not a BU-group of type $\II$ by \cite[p.60]{Br} and not of type I by a result of \cite{Wa}. As is observed in \cite{M1}, $S^3= S(\bH)$, $\bH$ the skew field of quaternions,  is not a BU-group of type I; in fact, the Hopf map $\pi: S^3 \to S^3/S^1\cong S^2$ is considered as an $S^3$-map.

In the early 1990s, this question and related topics were systematically studied in \cite{B} and \cite{M1}.  Consequently, it has been shown that almost compact Lie groups are neither BU-groups of type I nor of type \II.  However, some unsolved cases were left until recently. 
In \cite{N1}, we studied the remaining cases of finite groups and provided an answer to the above question in finite group case. In fact, we showed that the only finite BU-groups of type I [resp. \II] are the elementary abelian $p$-groups (including the trivial group). 

In this paper, we shall discuss the question in general compact Lie groups and provide a final answer to the question. 
As will be seen in the following sections, the question is reduced to the case of an extension of an $n$-torus by a cyclic group $C_p$ of prime order $p$, and then the following result will be shown.
\begin{proposition}\label{p1-1}
Let $G$ be any extension of an $n$-torus $T^{n}$ by a cyclic group $C_p$ of order $p$: 
\[1\to T^n\to G\to C_p\to 1,\quad  n\ge 1.\] 
Then $G$ is neither a BU-group of type I nor of type \II.   
\end{proposition}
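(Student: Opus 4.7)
The plan is to construct, for any such extension $G$, explicit fixed-point-free $G$-representations $V$ and $W$ together with a $G$-map $f : S(V) \to S(W)$ that simultaneously violates both forms of the Borsuk-Ulam conclusion: one pair $(V,W)$ with $\dim V > \dim W$ (contradicting type I) and, in an equidimensional variant, with $\deg f = 0$ (contradicting type II).

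First I would analyse the structure of $G$. Conjugation by a lift of the generator of $C_p$ induces an automorphism $\sigma \in \Aut(T^n)$ with $\sigma^p = \mathrm{id}$; since extensions with fixed $\sigma$ are classified by $H^2(C_p; T^n)$, which can be non-zero on account of the $p$-torsion of $T^n$ (e.g.\ the non-split extension $\mathrm{Pin}^-(2)$ for $n=1,\, p=2$), both split and non-split cases must be treated in a unified way. Clifford theory then classifies the irreducible $G$-representations: for each character $\chi$ of $T^n$ either $\chi$ is $\sigma$-fixed (and extends to $G$ in $p$ distinct ways, one per character of $C_p$) or the $\sigma$-orbit of $\chi$ has length $p$ (so $\Ind_{T^n}^G \chi$ is irreducible of complex dimension $p$).

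The key observation is that a fixed-point-free $G$-representation $V$ need not be fixed-point-free as a $T^n$-representation: only $(V^{T^n})^{C_p} = 0$ is required, so the $T^n$-Borsuk-Ulam inequality does not directly bound $\dim V$ against $\dim W$. I would exploit this freedom by building $V$ and $W$ as direct sums of a $T^n$-trivial part (carrying a non-trivial $C_p$-representation pulled back along $G \to C_p$) and a $C_p$-trivial part (a fixed-point-free $T^n$-representation), together with, if necessary, induced summands $\Ind_{T^n}^G \chi$ from non-$\sigma$-invariant characters. A $G$-equivariant polynomial map $f$ is then assembled from monomials of matching $(T^n, C_p)$-bidegree; the weights are to be chosen so that either $\dim V > \dim W$ or $\dim V = \dim W$ with $\deg f = 0$. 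The finite subgroups $T^n[p^k] \rtimes C_p \subseteq G$, which by the finite-group results of \cite{N1} are not BU-groups, serve as guides for the construction and provide the restriction targets for the degree computation.

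The main obstacle, I expect, is verifying that $f$ is nowhere zero on $S(V)$. $G$-equivariance forces $f$ to send each isotropy-fixed subset of $S(V)$ into the corresponding fixed subset of $S(W)$, so the polynomial terms of $f$ must be chosen so that their combined zero locus is a proper $G$-invariant subvariety of $V$ disjoint from the unit sphere. Handling this by an explicit algebraic choice of weights and coefficients --- respecting both the $\sigma$-action on $T^n$-characters and, in the non-split case, the constraint $g^p = t_0 \in T^n$ on any lift $g$ of the generator of $C_p$ --- is the delicate technical step. Once $f$ is constructed, the type I violation follows at once from the built-in dimension inequality; for type II, the vanishing of $\deg f$ is obtained by restricting to an appropriate $T^n[p^k] \rtimes C_p$-invariant subsphere and reducing to the finite computation of \cite{N1}.
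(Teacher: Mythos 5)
Your proposal is a plan rather than a proof, and the two places where it must do real work are exactly where it is silent. For the type I violation you need a nowhere-vanishing equivariant map $S(V)\to W\smallsetminus\{0\}$ with $\dim V>\dim W$. Equivariance forces $f(0)\in W^G=0$, and if $f$ were a complex polynomial map the fibre $f^{-1}(0)$ would be a positive-dimensional affine variety through the origin, hence unbounded and forced to meet $S(V)$; so "assembling $f$ from monomials of matching bidegree" cannot work over $\bC$, and you offer no real-polynomial (Hopf-type) substitute. The paper does not construct these maps by formulas at all: it uses equivariant obstruction theory. For $\Gamma_p=T^{p-1}\rtimes C_p$ the obstructions vanish for dimension reasons, but for $S^1\times C_p$ the primary obstruction lives in a group with coefficients $\pi_5(S^5)\cong\bZ$ and does \emph{not} vanish automatically; it is killed only by precomposing with a join $h*\mathrm{id}$, where $h$ is an equivariant map of degree zero produced beforehand by an Euler-class computation. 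Your proposal contains no mechanism for handling this obstruction, and acknowledging that nowhere-vanishing is "the delicate technical step" does not discharge it.

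Two further gaps. First, your type II argument --- "restrict to $T^n[p^k]\rtimes C_p$ and reduce to the finite computation of \cite{N1}" --- is not a valid inference: \cite{N1} asserts the \emph{existence} of some degree-zero map for those finite groups, not that your particular $f$ has degree zero. To force $\deg f=0$ one needs something like the paper's argument that $e_G(V)\ne 0$ while $e_G(W)=0$ in $H^*(BG;\bQ)$ (via restriction to $S^1$); alternatively, type II follows formally from type I by a join argument (the paper's Proposition \ref{p3-1}), which you do not invoke. Second, by attacking all extensions at once you take on the full classification of $\bZ[C_p]$-lattices and of the extension classes in $H^2(C_p;T^n)$. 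The paper sidesteps both: the anti-BU property passes to a group from any quotient group and from any finite-index subgroup, and dividing by the centre $(T^n)^{C_p}$ makes every extension split; integral representation theory of $C_p$ then reduces everything to exactly two groups, $S^1\times C_p$ and $\Gamma_p$. Without such a reduction, your promised "unified treatment" of the split and non-split cases, and of arbitrary $\sigma$-actions, remains unexecuted.
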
 
As a consequence, we obtain a final answer to the question as follows:
\begin{theorem}\label{t1-2}
The following statements are equivalent.
\begin{enumerate}
\item 
A compact Lie group $G$ is a BU-group of type I.
\item
A compact Lie group $G$ is a BU-group of type \II.
\item
A compact Lie group $G$ is isomorphic to an elementary abelian $p$-group $C_p{}^n$ or an $n$-torus $T^n$, $n\ge 0$.
\end{enumerate}
\end{theorem}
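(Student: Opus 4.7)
The plan is to derive (3) $\Rightarrow$ (1), (2) directly from the classical Borsuk-Ulam type theorem stated in the introduction (for type I) and from \cite{M0} (for type \II). For the converse implications (1) $\Rightarrow$ (3) and (2) $\Rightarrow$ (3), I would combine the structural results of \cite{B, M1}, the finite-group classification of \cite{N1}, and Proposition \ref{p1-1}.

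Let $G$ be a compact Lie group that is a BU-group of type I (or of type \II). First, invoke the structural results of \cite{B, M1} to restrict attention to those $G$ whose identity component $G^0$ is a torus $T^n$ ($n \ge 0$); the non-abelian connected case is excluded there via the Hopf-type obstruction arising from an $SU(2) = S(\bH)$-subgroup present in any compact connected non-abelian Lie group. Write $F := G/G^0$, a finite group. If $n = 0$, then $G$ is finite and \cite{N1} gives $G \cong C_p^m$. If $n \ge 1$ and $F = 1$, then $G = T^n$. In the remaining subcase $n \ge 1$ with $F \ne 1$, apply Cauchy's theorem to choose an element $g \in F$ of prime order $p$ and let $H \le G$ be the preimage of $\gr{g} \cong C_p$ under the quotient map $G \to F$, so that $H$ sits in an extension
\[1 \to T^n \to H \to C_p \to 1.\]
Proposition \ref{p1-1} then shows that $H$ is neither of type I nor of type \II.

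It remains to transfer this failure from the finite-index subgroup $H$ up to $G$. Given fixed-point-free $H$-representations $V, W$ and an $H$-map $f : S(V) \to S(W)$ witnessing the failure for $H$, form the induced $G$-representations $\Ind_H^G V$ and $\Ind_H^G W$; Frobenius reciprocity yields $(\Ind_H^G V)^G \cong V^H = 0$, so these remain fixed-point-free. Extend $f$ radially to an $H$-equivariant norm-preserving map $V \to W$ and assemble its conjugates coset-by-coset into a $G$-equivariant norm-preserving map $\Ind_H^G V \to \Ind_H^G W$. This produces a $G$-map of representation spheres with $\dim \Ind_H^G V > \dim \Ind_H^G W$ (contradicting type I), or in the equidimensional case a map of degree $(\deg f)^{[G:H]} = 0$ (contradicting type \II). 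Hence $F = 1$ in this subcase, and the case analysis is complete.

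The main obstacle in the entire program is Proposition \ref{p1-1} itself; Theorem \ref{t1-2} reduces to the organisational case analysis above once that is in hand. The only nontrivial step inside the analysis is the subgroup-inheritance argument, which requires the finite-index condition $[G:H] < \infty$; this condition is precisely what the reduction to $G^0 = T^n$ supplies, and it is what makes the structural input from \cite{B, M1} an essential ingredient.
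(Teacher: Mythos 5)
Your proposal is correct and assembles exactly the same ingredients as the paper: the Bartsch--Marzantowicz result excluding connected non-abelian groups, the finite-group classification of \cite{N1}, Proposition \ref{p1-1} for torus-by-$C_p$ extensions, and inheritance of the anti-BU property under passage to finite-index closed subgroups. The differences are organisational rather than substantive. Where the paper first forces $F=G/G_0$ to be an elementary abelian $p$-group (using that $F$ is a quotient of $G$ together with \cite{N1} and Proposition \ref{p2-1}(1)) and only then descends from $C_p{}^m$ to $C_p$ via an index-$p^{m-1}$ subgroup, you use Cauchy's theorem to pull back a single prime-order element of an arbitrary nontrivial $F$ --- a slightly more direct route that bypasses the quotient-inheritance step entirely. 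You also re-prove the finite-index inheritance (the paper's Proposition \ref{p2-1}(2), which is simply cited from \cite{N1}) by an explicit induced-representation construction; that argument is sound, including the Frobenius-reciprocity check that induction preserves fixed-point-freeness and the degree computation $(\deg f)^{[G:H]}$ for the joined map. Finally, you prove (3)$\Rightarrow$(1) directly from the classical Borsuk--Ulam type theorem, whereas the paper routes (3)$\Rightarrow$(2)$\Rightarrow$(1) through Proposition \ref{p3-1}. One small caveat: your parenthetical that the connected non-abelian case is excluded ``via an $SU(2)$-subgroup'' cannot be the mechanism, since an $SU(2)$-subgroup of a larger connected group has infinite index and the subgroup-inheritance principle does not apply to it; but as you are merely citing the result of \cite{B} rather than relying on that gloss, this does not affect the validity of your argument.
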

This theorem is deeply related to some results of \cite{M1}.
In \cite{M1}, a compact Lie group $G$ is said to have the Borsuk-Ulam property in the strong sense $A$ (or the property $\II A$) if the following property is satisfied:  for any $G$-map $f:S(V) \to S(W)$ with $\dim V = \dim W$ and $\dim V^G = \dim W^G$, if the degree $\deg f^G$ is prime to $|G/G_0|$, then $\deg f \ne 0$. 
Theorem \ref{t1-2} improves a result of \cite{M1} as follows:

\begin{corollary}
A compact Lie group $G$ has the Borsuk-Ulam property in the strong sense $A$ if
and only if $G$ is isomorphic to an elementary abelian $p$-group $C_p{}^n$ or an $n$-torus $T^n$, $n\ge 0$.
\end{corollary}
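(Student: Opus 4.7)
The plan is to derive the corollary from Theorem~\ref{t1-2} together with the classical equivariant-localization and Smith-theory arguments already present in \cite{M0, M1}.

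For the only-if direction, I would show that property $\II A$ implies the BU-property of type $\II$, whereupon Theorem~\ref{t1-2} forces $G \cong C_p^n$ or $T^n$. Given fixed-point-free $G$-representations $V, W$ with $\dim V = \dim W$ and a $G$-map $f: S(V) \to S(W)$, set $V' := V \oplus \bR$ and $W' := W \oplus \bR$ (trivial $G$-action on the added summands) and take the equivariant suspension $f' := \Sigma f : S(V') \to S(W')$. Then $(V')^G = (W')^G = \bR$, so the dimension hypotheses of property $\II A$ hold; moreover $(f')^G : S^0 \to S^0$ is the identity, so $\deg (f')^G = 1$, trivially coprime to $|G/G_0|$. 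Hence property $\II A$ yields $\deg f' \ne 0$, and since suspension preserves degree, $\deg f = \deg f' \ne 0$, giving the BU-property of type $\II$.

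For the converse, the fact that $T^n$ and $C_p^n$ themselves satisfy property $\II A$ is essentially contained in \cite{M0, M1}: for $G = T^n$ the index $|G/G_0|$ equals $1$, and equivariant-cohomology localization expresses $\deg f$ as $\deg f^G$ multiplied by a nonzero ratio of equivariant Euler classes of the fixed-point-free complements $V/V^G$ and $W/W^G$ (well-defined and non-vanishing by the dimension equalities); for $G = C_p^n$ the Smith-theoretic congruence $\deg f \equiv \deg f^G \pmod{p}$ gives the conclusion, since $\deg f^G$ being coprime to $|G/G_0| = p^n$ is exactly its being coprime to $p$.

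The principal obstacle is absorbed in Theorem~\ref{t1-2}; once that is in hand, the corollary is a brief formal consequence via the suspension argument and the already-established facts about $C_p^n$ and $T^n$. One minor point to be careful about is the convention that ``$\deg f^G$ prime to $|G/G_0|$'' should exclude the degenerate case $\deg f^G = 0$, which both interpretations in \cite{M1} and the suspension construction above naturally respect.
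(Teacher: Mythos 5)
Your proposal is correct and follows essentially the same route as the paper: deduce the BU-property of type $\II$ from property $\II A$ and then invoke Theorem~\ref{t1-2}, citing \cite{M0} for the converse. The only difference is cosmetic --- where the paper treats fixed-point-free $V,W$ directly via the convention $\deg f^G=1$ for the map of empty fixed-point sets, you add a trivial summand and suspend once so that $(f')^G$ is literally the identity on $S^0$; this is a harmless (arguably cleaner) variant of the same step.
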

Indeed, if $G$ has the Borsuk-Ulam property in the strong sense $A$, then $G$ is a BU-group of type \II, since $\deg f^G =1$ when $S(V)^G=S(W)^G = \emptyset$. Therefore $G$ is isomorphic to an elementary abelian $p$-group $C_p{}^n$ or an $n$-torus $T^n$, $n\ge 0$. On the other hand, the converse is already shown by \cite[Theorems 1 and 2]{M0}.
\begin{remark}
Theorem 2 (b) of \cite{M1} is unfortunately incorrect. In particular, Lemma 1.2 of \cite{M1} does not hold for compact Lie groups with the property of type $\II A$. Theorem \ref{t1-2} also gives a negative answer to the conjecture of \cite{Bl} that $G$ has the Borsuk-Ulam property if and only if $G\cong T^n\times C_p{}^m$.
\end{remark}

\section{The property of BU-groups and reduction of cases}
Since the main focus of this paper is to construct counterexamples of the Borsuk-Ulam property, we give the following definition. 
\begin{definition*}
A compact Lie group $G$ is called an \emph{anti-BU-group} of type I  [resp. \II] if $G$ is not a BU-group of type I [resp. \II]. 
\end{definition*}

In \cite{N1}, we showed the fundamental property of BU-groups of each type. We restate it as follows:

\begin{proposition}[(\cite{N1})]\label{p2-1}
Let $G$ be a compact Lie group.
\begin{enumerate}
\item 
If there exists a quotient group $Q$ of $G$ being an anti-BU-group of type I [resp. $\II$], then  $G$ is an anti-BU-group of type I [resp. $\II$].
\item
If there exists a closed subgroup $H$ of finite index being an anti-BU-group of type I [resp. $\II$], then $G$ is an anti-BU-group of type I [resp. $\II$].
\end{enumerate}
\end{proposition}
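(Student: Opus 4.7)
The strategy is to handle the two parts by two standard constructions of equivariant topology: inflation of representations along a quotient map for (1), and induction of representations combined with joining of equivariant maps for (2). Both reduce the task of producing a Borsuk--Ulam counterexample for $G$ to the given counterexample for the relevant smaller group.

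For (1), I would let $\pi \colon G \to Q$ be the quotient with kernel $N$ and inflate the given pair of fixed-point-free $Q$-representations $V$, $W$ to $G$-representations via $\pi$. Since $N$ acts trivially, the $G$-fixed sets coincide with the $Q$-fixed sets and remain zero, the dimensions of $V$ and $W$ do not change, and any $Q$-map $f \colon S(V) \to S(W)$ is automatically $G$-equivariant. Hence the very same pair and map witness that $G$ is anti-BU of the same type; the degree computation needed for type $\II$ is unaffected because the underlying map is unchanged.

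For (2), set $n = [G : H]$ and start from a fixed-point-free pair of $H$-representations $V$, $W$ together with an $H$-map $f \colon S(V) \to S(W)$ violating the BU property. I would then pass to the induced $G$-representations $\Ind_H^G V$ and $\Ind_H^G W$, whose dimensions are scaled by $n$. Frobenius reciprocity gives $(\Ind_H^G V)^G \cong V^H = 0$, and likewise for $W$, so both induced representations are fixed-point-free. To produce a $G$-map between the spheres, I would pick coset representatives $g_1, \dots, g_n$ for $G/H$, identify $S(\Ind_H^G V)$ with the $n$-fold join of $S(V)$ indexed by these cosets, and take the join $\tilde f = f \ast f \ast \cdots \ast f$. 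The inequality $\dim V > \dim W$ is preserved under multiplication by $n$, giving the type I failure, while multiplicativity of degree under joins yields $\deg \tilde f = (\deg f)^n$, so $\deg f = 0$ forces $\deg \tilde f = 0$, giving the type $\II$ failure.

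\textbf{Main obstacle.} The only real subtlety lies in (2): one must verify that the join $\tilde f$ is genuinely $G$-equivariant, not merely $H$-equivariant on each factor. This amounts to carefully tracking how an element $g \in G$ simultaneously permutes the join coordinates through the induced map $G/H \to G/H$ and twists each factor by the $H$-part of the coset decomposition $g g_i = g_{\sigma(i)} h_i$. The $H$-equivariance of $f$ is used precisely to absorb the twists $h_i$. Once this is confirmed, the identification $(\Ind_H^G V)^G \cong V^H$ and the multiplicativity of degree under joins are standard, and both conclusions fall out.
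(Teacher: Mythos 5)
Your proposal is correct. Note that the paper itself gives no proof of this proposition --- it merely restates it from \cite{N1} --- so there is nothing to compare against line by line; but your two constructions (inflation of representations along $G\to Q$ for part (1), and induction $\Ind_H^G$ together with the $[G:H]$-fold join of the $H$-map for part (2)) are exactly the standard arguments that establish it. The points you flag are the right ones and all go through: $V^G=V^Q$ under inflation, $(\Ind_H^G V)^G\cong V^H=0$ by Frobenius reciprocity, the $G$-equivariance of the join map via the cocycle $gg_i=g_{\sigma(i)}h_i$ absorbed by $H$-equivariance of $f$, and multiplicativity of degree under joins so that $\deg f=0$ forces $\deg\tilde f=0$.
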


Let $G_0$ denote the identity component of a compact Lie group $G$ and set $F=G/G_0$ a finite group.
By \cite[Theorem 2 and Proposition 2.2]{B}, connected compact Lie groups other than tori are anti-BU-groups of both types I and \II. On the other hand, by \cite{N1}, finite groups other than $C_p{}^m$ are anti-BU-groups of both types I and \II. Thus if $G_0$ is not a torus or if $F$ is not an elementary abelian $p$-group, then $G$ is an anti-BU-group of both types I and $\II$ by Proposition \ref{p2-1}. 
Therefore it suffices to consider the following type of extension
\[1\to T^n \to G \to C_p{}^m\to 1 \ (n\ge 1, m\ge1).\]
Furthermore, there exists a closed subgroup of index $p^{m-1}$ of such $G$; indeed, one can take $\pi^{-1}(C_p)$ for some $C_p\le C_p{}^m$, where $\pi: G\to C_p{}^m$ is the projection. Thus, by Proposition \ref{p2-1}, $G$ is reduced to the case of an extension
\begin{equation}\label{e2-1}
1\to T^n \to G \overset{\pi}{\to} C_p\to 1.
\end{equation}
In order to prove Proposition \ref{p1-1}, a further reduction is needed.
Here an $n$-torus $T^n$ is described by \[T^n = \{t=(t_1,\dots, t_n)\in \bC^n\,|\, |t_i|=1 \ (1\le i \le n)\}.\]
Let $a\in C_p$ denote a (fixed) generator of $C_p$. We here introduce
a semi-direct product:
 \[\Ga_p=T^{p-1} \rtimes_\rho C_p, \] 
where 
$\rho: C_p=\gr{a}\to \Aut(T^{p-1})$ is the homomorphism defined by 
\begin{equation}\label{e2-2}
\rho(a)(t)=(t_{p-1}^{-1}, t_1t_{p-1}^{-1}\dots, t_{p-2}t_{p-1}^{-1})
\end{equation}
for $t=(t_1, \dots t_{p-1})\in T^{p-1}.$
In fact, one can easily check that the $p$-fold composition of $\rho(a)$ with itself is $id$ and then $\rho$ defines the conjugate action of $C_p$ on $T^n$:
$$ata^{-1}=\rho(a)(t).$$
In particular, $\Ga_2$ is isomorphic to $\text{O}(2)$ the orthogonal group in dimension $2$.
The group $\Ga_p$ is a split extension of $T^{p-1}$ by $C_p$:
\[1\to T^{p-1}\to \Ga_p\to C_p \to 1,\] 
and plays an important role in the proof of Proposition \ref{p1-1}.
The remainder of this section will be devoted to showing the following:
\begin{proposition}\label{p2-2}
For any extension {\rm (\ref{e2-1})}, there exists a closed normal subgroup $N$ such that
$G/N$ is isomorphic to $S^1\times C_p$ or $\Gamma_p$.  
\end{proposition}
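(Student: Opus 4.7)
The plan is to analyse $G$ through the conjugation action $\phi\colon C_p \to \Aut(T^n)$ on the cocharacter lattice $\Lambda = X_{\ast}(T^n) \cong \bZ^n$, which is a $\bZ[C_p]$-module. Since over $\bQ$ the only irreducible $C_p$-representations are the trivial one and the cyclotomic one $\bQ(\zeta_p)$ of dimension $p-1$, one has
\[
\Lambda \otimes \bQ \;\cong\; \bQ^{a} \oplus \bQ(\zeta_p)^{b}, \qquad a + b(p-1) = n.
\]
I would take $N$ inside $T^n$ in both cases, since any $C_p$-invariant closed subgroup of $T^n$ is automatically normal in $G$.

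If $a \ge 1$, then by self-duality of the trivial rational representation the dual lattice $\Lambda^{*}$ also has $C_p$-invariants of rank $\ge 1$, so I can choose a primitive $C_p$-invariant character $\chi\colon T^n \to S^1$ and set $N = \ker \chi$. Then $T^n/N \cong S^1$ carries the trivial $C_p$-action, so $G/N$ is a central extension of $C_p$ by $S^1$. From the exponential sequence $0 \to \bZ \to \bR \to S^1 \to 0$ together with the $2$-periodicity of Tate cohomology for cyclic groups, $H^2(C_p, S^1_{\mathrm{triv}}) \cong \hat{H}^{1}(C_p, \bZ) = 0$, so the extension splits and $G/N \cong S^1 \times C_p$.

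The delicate case is $a = 0$, $b \ge 1$. Here $1 + a + \cdots + a^{p-1}$ annihilates $\Lambda \otimes \bQ$ and hence the torsion-free lattice $\Lambda$, so $\Lambda$ becomes a finitely generated module over the Dedekind ring $\bZ[\zeta_p]$. Pick $g \in G$ lifting a generator $a \in C_p$ and set $t_0 = g^p$. Since $g t_0 g^{-1} = t_0$, one has $t_0 \in (T^n)^{C_p}$, which via the exponential sequence is identified with $\Lambda/(a-1)\Lambda \cong \bF_p^{b}$; in particular $t_0$ has order $1$ or $p$. In this case the multiplicative norm $T^n \to T^n$ has differential $\sum_{k=0}^{p-1}\phi(a)^k = 0$ and is therefore constant equal to $1$, so under $H^2(C_p, T^n) \cong (T^n)^{C_p}/N(T^n) = (T^n)^{C_p}$ the extension class of $G$ equals $t_0$. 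I would build $N$ in two stages. First set $F = \gr{t_0}$, a finite $C_p$-invariant subgroup of order $1$ or $p$; then in $G/F$ the image $\bar g$ satisfies $\bar g^p = 1$, so $G/F \cong (T^n/F) \rtimes_\phi C_p$ is split. Second, the cocharacter lattice $\Lambda_F$ of $T^n/F$ is still a nonzero $\bZ[\zeta_p]$-module, and since $\bZ[\zeta_p]$ has rank one over itself any nonzero element of the positive-$\bQ$-rank module $\operatorname{Hom}_{\bZ[\zeta_p]}(\Lambda_F, \bZ[\zeta_p])$ has image a nonzero ideal of $\bZ[\zeta_p]$, hence finite cokernel, and therefore defines a $C_p$-equivariant surjection $q\colon T^n/F \twoheadrightarrow T^{p-1}_\rho$. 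Letting $N$ be the preimage in $T^n$ of $\ker q$, the image of $\bar g$ in $G/N$ still has order $p$ (it maps nontrivially to $C_p$, hence is not in the torus part), so $G/N \cong T^{p-1} \rtimes_\rho C_p = \Ga_p$.

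The main obstacle I anticipate is entirely in Case~$2$: a direct one-step equivariant surjection $T^n \twoheadrightarrow T^{p-1}_\rho$ need not carry the class of $G$ to zero in $H^2(C_p, T^{p-1}_\rho) \cong \bF_p$, and a non-split extension of $T^{p-1}_\rho$ by $C_p$ is not abstractly isomorphic to the split group $\Ga_p$ (already at $p = 2$, the non-split version, in which lifts of the generator have order $4$, is not isomorphic to $\orth(2) = \Ga_2$). The preliminary quotient by $F = \gr{t_0}$ is the device needed to trivialise this obstruction before projecting to $T^{p-1}_\rho$, and it works because in Case~$2$ the entire obstruction lives in the finite subgroup $(T^n)^{C_p}$ of order $p^{b}$.
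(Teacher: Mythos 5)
Your proof is correct. It shares the paper's overall strategy---translate the conjugation action into a $\bZ[C_p]$-lattice and look for a quotient whose lattice is trivial or cyclotomic---but the execution differs at both key points. The paper splits the extension once and for all by passing to $G/Z(G)$ with $Z(G)=(T^n)^{C_p}$ (Lemma \ref{l2-4}), and then invokes the classification of indecomposable $\bZ[C_p]$-lattices into the three types $\bZ$, $I$, $(I,\nu)$ from \cite[\S 74]{CR} to extract a quotient lattice isomorphic to $\bZ$ or to a fractional ideal $I$, finally relating $G_I$ to $G_{I_0}\cong\Ga_p$ via the isogeny induced by an inclusion $I\subset I_0=\bZ[\xi_p]$. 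You bypass the integral classification: the rational decomposition $\Lambda\otimes\bQ\cong\bQ^a\oplus\bQ(\xi_p)^b$ suffices. In the case $a\ge 1$ you split by the vanishing of $H^2(C_p,S^1)$ for the trivial action (where the paper's explicit section $s(a)=\ol\al$ does the same job), and in the case $a=0$ you split by the smaller central quotient $\gr{g^p}\le (T^n)^{C_p}$ and then use projectivity of the lattice over the Dedekind domain $\bZ[\xi_p]$ to produce an equivariant lattice map onto a finite-index ideal of $\bZ[\xi_p]$, hence a surjection of tori onto $T^{p-1}_\rho$; this is essentially the paper's isogeny $I\hookrightarrow I_0$ run covariantly. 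Your route buys independence from the Diederichsen--Reiner classification (only Dedekindness of $\bZ[\xi_p]$ is needed); the paper's buys a single uniform splitting step with no case division. Your closing observation about $\mathrm{Pin}(2)$ versus $\orth(2)$ correctly pinpoints why some splitting device must precede the projection onto the cyclotomic torus.
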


We begin by observing the following fact.
\begin{lemma}\label{l2-3}
If $C_p$ acts trivially on $T^n$ for extension {\rm (\ref{e2-1})}, then $G$ is abelian.
\end{lemma}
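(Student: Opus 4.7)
The plan is to observe that ``$C_p$ acts trivially on $T^n$'' exactly means that $T^n$ is central in $G$, and then show that $G$ is generated by $T^n$ together with any single preimage of a generator of $C_p$, which immediately forces commutativity. The fact that we do not need to know whether the extension splits is important — we just pick an arbitrary set-theoretic lift of a generator.

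More precisely, I would fix a generator $a$ of $C_p$ and choose any element $g\in G$ with $\pi(g)=a$. Since $\pi$ is surjective, every $h\in G$ satisfies $\pi(h)=a^k$ for some $0\le k\le p-1$, hence $hg^{-k}\in\ker\pi=T^n$, so $h=tg^k$ for some $t\in T^n$. Thus
\[ G=\{\,tg^k\mid t\in T^n,\ 0\le k\le p-1\,\}. \]
The hypothesis that the induced conjugation action of $C_p$ on $T^n$ is trivial means that $gtg^{-1}=t$ for all $t\in T^n$, i.e.\ $g$ commutes with every element of $T^n$.

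From this the computation is immediate: for $t_1,t_2\in T^n$ and $0\le k_1,k_2\le p-1$,
\[ (t_1g^{k_1})(t_2g^{k_2})=t_1(g^{k_1}t_2g^{-k_1})g^{k_1+k_2}=t_1t_2\,g^{k_1+k_2}, \]
and since $T^n$ is abelian this equals $t_2t_1\,g^{k_2+k_1}=(t_2g^{k_2})(t_1g^{k_1})$. Hence $G$ is abelian.

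There is essentially no obstacle here; the only point worth being careful about is that the sequence $1\to T^n\to G\to C_p\to 1$ is not assumed to split, so $g^p$ is merely some element of $T^n$ rather than the identity. This does not affect the argument, because only the relation $gt=tg$ for $t\in T^n$ is used, and the presentation $G=T^n\cdot\langle g\rangle$ holds regardless of what $g^p$ is.
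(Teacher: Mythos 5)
Your proof is correct and follows essentially the same route as the paper: choose a lift $\alpha$ (your $g$) of the generator $a$, write every element of $G$ as a product of a torus element and a power of the lift, and use that the trivial conjugation action forces the lift to commute with $T^n$. Your explicit remark that the argument never uses $g^p=1$, so the extension need not split, is a worthwhile point of care but does not change the substance.
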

\begin{proof}
Let $\pi: G\to C_p$ be the projection and $\al\in G$ an element such that $\pi(\al)=a$.
Let \[G=\coprod_{i=0}^{p-1} \al^iT^n\] be a coset decomposition.
Since the $C_p$-action on $T^n$ is trivial, it follows that $\al^it\al^{-i} = t$ for any $t\in T^n$ and $0\le i\le p-1$. Thus, 
for any $g=\al^it$, $h= \al^js \in G$, where $t$, $s\in T^n$, 
\[gh = \al^it \al^js = \al^{i+j}ts = \al^js\al^it= hg.\]
Thus $G$ is abelian.
\end{proof}
By the following lemma, any extension (\ref{e2-1}) is reduced to a split extension.
\begin{lemma}\label{l2-4}
For any extension {\rm (\ref{e2-1})}, 
there exists a closed normal subgroup $N$ of $G$ such that $G/N$ is a split extension of $T^l$ by $C_p$:
\[1\to T^l\to G/N \to C_p\to 1\]
 for some $1\le l \le n$.
 \end{lemma}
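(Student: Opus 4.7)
The plan is to produce a finite normal subgroup $N$ of $G$ contained in $T^n$ such that the induced extension on $G/N$ already splits, with $T^n/N$ still being an $n$-torus (so $l=n$).

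First I would pick a lift $\alpha \in G$ of the generator $a \in C_p$ and set $\tau := \alpha^p \in T^n$. Since $\alpha$ commutes with its own power, $\tau$ is fixed by $\phi:=$ (conjugation by $\alpha$) on $T^n$. Using divisibility of $T^n$, I would choose $\beta \in T^n$ with $\beta^p = \tau$, and define
\[N:=\gr{\phi^i(\beta)\beta^{-1} \mid 0 \le i \le p-1}.\]
Each generator satisfies $(\phi^i(\beta)\beta^{-1})^p = \phi^i(\tau)\tau^{-1} = 1$, so $N$ is a finite subgroup of the $p$-torsion of $T^n$; a direct check using $\phi(\phi^i(\beta)\beta^{-1}) = (\phi^{i+1}(\beta)\beta^{-1})(\phi(\beta)\beta^{-1})^{-1}$ shows $\phi(N)\subseteq N$. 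Since $N\subseteq T^n$ is abelian and is pointwise stable under conjugation by $\alpha$, $N$ is normal in $G$.

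It then remains to verify that $1\to T^n/N \to G/N \to C_p \to 1$ splits. Using the commutation relation $t\alpha = \alpha\phi^{-1}(t)$ for $t \in T^n$ and an easy induction, one computes
\[(\alpha\beta^{-1})^p = \tau \cdot N_\phi(\beta)^{-1}, \quad N_\phi(\beta):=\prod_{i=0}^{p-1}\phi^i(\beta).\]
By the defining property of $N$, each factor $\phi^i(\beta)\equiv \beta \pmod{N}$, whence $N_\phi(\beta) \equiv \beta^p = \tau \pmod{N}$, and so $(\alpha\beta^{-1})^p \in N$. The image of $\alpha\beta^{-1}$ in $G/N$ therefore has order exactly $p$ and generates a $C_p$-complement to $T^n/N$, giving the desired split extension with $l=n$.

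The main obstacle is to arrange a single $N$ that is simultaneously finite (so $T^n/N$ remains a torus), $\phi$-invariant (so $N$ is normal in $G$), and large enough to trivialise the cocycle obstruction represented by $\tau$ in $G/N$. The construction $N = \gr{\phi^i(\beta)\beta^{-1}}$ meets all three constraints at once, by exploiting the observation that the twist $\phi(\beta)\beta^{-1}$ of any $p$-th root $\beta$ of a $\phi$-fixed element is automatically $p$-torsion, which tethers the whole argument to the $p$-torsion subgroup $T^n[p]$.
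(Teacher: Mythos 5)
Your proof is correct, but it takes a genuinely different route from the paper's. The paper argues by cases: if $G$ is abelian the extension already splits, and if not it identifies the fixed subgroup $(T^n)^{C_p}$ with the centre $Z(G)$, observes that $\alpha^p\in Z(G)$ for any lift $\alpha$ of $a$, and takes $N=Z(G)$, so that $\bar\alpha$ itself has order $p$ in $G/Z(G)$ and furnishes the splitting; the price is that $Z(G)$ may be positive-dimensional, so the torus drops to some $T^l$ with $l$ possibly smaller than $n$ (still $\ge 1$ because the action is non-trivial in that case). You instead trivialise the extension class directly: choosing a $p$-th root $\beta$ of $\tau=\alpha^p$ and adjoining the twists $\phi^i(\beta)\beta^{-1}$, which land in the $p$-torsion $T^n[p]$ because $\tau$ is $\phi$-fixed, you get a \emph{finite} $\phi$-invariant $N$ modulo which the norm defect $\tau N_\phi(\beta)^{-1}=(\alpha\beta^{-1})^p$ dies. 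This buys a sharper statement (one may always take $N$ finite and $l=n$) and a uniform argument with no abelian/non-abelian case split, at the cost of the explicit commutator/norm computation; the paper's version is shorter and suffices for its purposes, since only the existence of some $N$ with $1\le l\le n$ is ever used. One cosmetic point: what your computation $\phi(\phi^i(\beta)\beta^{-1})=(\phi^{i+1}(\beta)\beta^{-1})(\phi(\beta)\beta^{-1})^{-1}$ establishes (together with $\phi^p=\mathrm{id}$, which holds because $\alpha^p\in T^n$ is central in $T^n$) is that $N$ is \emph{setwise} invariant under conjugation by $\alpha$, not pointwise fixed; setwise invariance is exactly what normality of $N$ in $G$ requires, so the argument stands, but the word ``pointwise'' should be removed.
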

\begin{proof}
If $G$ is abelian, then $G\cong T^n\times C_p$ and $G$ itself is a split extension.
Assume that $G$ is non-abelian. Then the $C_p$-action on $T^n$ is non-trivial by Lemma \ref{l2-3}, and 
hence one can see that the fixed-point subgroup $(T^n)^{C_p}$ of $T^n$ coincides with the centre $Z(G)$ of $G$. Clearly for $\al \in G$ with $\pi(\al)=a$, it follows that $\al^{p}\in (T^n)^{C_p}=Z(G)$. 
Consider an extension
\[1\to T^n/Z(G)\to G/Z(G) \overset{\ol \pi}{\to} C_p\to 1.\]
Note that $T^n/Z(G) \cong T^l$ for some $1\le l\le n$ since the $C_p$-action on $T^n
$ is non-trivial. 
This extension splits, indeed, a splitting $s: C_p\to G/Z(G)$ is given by $s(a) = \overline\al \in G/Z(G)$, since ${\ol\al}^p =1 \in G/Z(G)$ and $\ol\pi(\ol\al)=a$.
\end{proof}

By Lemma \ref{l2-4}, it suffices to consider a semi-direct product
\[G_\sig := T^n \rtimes_\sig C_p,\] 
where
$\sig: C_p\to \Aut(T^n)$ is a homomorphism which gives the conjugate $C_p$-action on $T^n$. We introduce the following terminology.
An $n$-torus $T^n$ with a homomorphism $\sig: C_p\to \Aut(T^n)$ is called a \emph{$C_p$-torus}, denoted by $T^n_\sig$. 
If a closed subgroup $H$ of $T^n_\sig$ is $C_p$-invariant, then $H$ is called a \emph{$C_p$-subgroup}. Clearly a $C_p$-subgroup $H$ is a closed normal subgroup of $G_\sig$.
In particular, if a subtorus $T$ of $T^n_\sig$ is $C_p$-invariant,  then
$T$ is called a \emph{$C_p$-subtorus} and denoted by $T_\sig$.

As is well-known, $\Aut(T^n)$ is naturally identified with $\GL_n(\bZ)$. 
In fact, an automorphism $\phi\in \Aut(T^n) = \Aut(\bR^n/\bZ^n)$ corresponds to an isomorphism $\wt \phi: \bR^n\to \bR^n$ such that $\wt\phi(\bZ^n)=\bZ^n$, and this corresponds to an isomorphism $\ol \phi:=\wt\phi|_{\bZ^n} :\bZ^n \to \bZ^n$. 
For example, consider $\Ga_p=T^{p-1}\rtimes_\rho C_p$ introduced before. 
Identifying $T^{p-1}$ with $\bR^{p-1}/\bZ^{p-1}$ by the exponential map
\[\exp([x_1, \dots, x_{p-1}]) = (e^{2\pi\sqrt{-1}x_1}, \dots, e^{2\pi\sqrt{-1}x_n})\in T^{p-1},\quad [x_1, \dots, x_{p-1}]\in \bR^{p-1}/\bZ^{p-1},\]
one sees that $\rho(a)\in \Aut(T^{p-1})$ is represented by the $(p-1)\times (p-1)$ matrix
\begin{equation}\label{e2-3}
A=\begin{pmatrix}
0&0&\cdots&0&-1\\
1&0&\cdots&0&-1\\
0&1&\ddots&\vdots&\vdots\\
\vdots&\ddots&\ddots&0&-1\\
0&\cdots&0&1&-1
\end{pmatrix}
\end{equation}
under the standard basis of $\bZ^{p-1}$.
Thus a $C_p$-torus $T^n_\sig$ induces a $\bZ[C_p]$-module $M_\sigma$ whose underling $\bZ$-module is $\bZ^n$  such that the action of $C_p$ on $M_\sig$ is given by $a\cdot x = \ol{\sig(a)}x$ for $x\in M_\sig$.

Conversely, a $\bZ[C_p]$-module $M$ whose underling $\bZ$-module is $\bZ^n$ provides a homomorphism $\psi: C_p\to \GL_n(\bZ)$ induced by the $C_p$-action on $M$. Then $\psi$ induces a homomorphism $\sig: C_p\to \Aut(T^n)$ via the exponential map.  This provides a $C_p$-torus $T_M:=T^n_\sig$ and a semi-direct product $G_M:=G_\sig = T^n\rtimes_\sig C_p$. 
Furthermore,  a $\bZ[C_p]$-submodule $L$ of $M$ provides a $C_p$-subtorus $T_L$ which is a closed normal subgroups of $G_M$. 
Note that, if $\rank_\bZ L= l$, then $\dim T_L =l$ and $G_L$ is a split extension of $T_L$ by $C_p$: \[1\to T_L\to G_L\to C_p\to 1. \]

By integral representation theory, any $\bZ[C_p]$-module $M$ whose underling $\bZ$-module is $\bZ^n$ decomposes into a direct sum of some indecomposable $\bZ[C_p]$-modules (although the decomposition is not unique). 
The indecomposable $\bZ[C_p]$-module is isomorphic to one of following three types, see  \cite[\S 74]{CR} for details.
\begin{enumerate}
\item 
$\bZ$:  the trivial $\bZ[C_p]$-module.
\item
$I$: a fractional ideal $I$ of the cyclotomic field $\bQ(\xi_p)$, where $\xi_p = e^{2\pi\sqrt{-1}/p}$ a primitive $p$-th root of unity.  Note that $I$ is regarded as a $\bZ[C_p]$-module whose underlying $\bZ$-module is $\bZ^{p-1}$ via the natural ring homomorphism $\bZ[C_p]\to \bZ[\xi_p]$.  Furthermore, fractional ideals $I$ and $J$ are isomorphic as $\bZ[C_p]$-modules if and only if these are in the same ideal class of $\bQ(\xi_p)$. For example, the ideal class group of $\bQ(\xi_{23})$ is of order $3$, see \cite{W}, and so there are three isomorphism classes. 
\item
$(I, \nu):=I\oplus \bZ y$ ($\rank_{\bZ}(I,\nu) = p$), where $I$ is a fractional ideal $I$ of $\bQ(\xi_p)$ and $\nu$ is an element of $I$ such that $\nu\not\in (\xi_p-1)I$. The action of $a$ on $y$ is given by 
$ay = \nu + y \in (I,\nu)$. Note that the isomorphism class of $(I, \nu)$ depends only on the ideal class of $I$ and does not depend on the choice of $\nu$. 
\end{enumerate}

\begin{proof}[Proof of Proposition \ref{p2-2}] Suppose that $G=G_\sig$ for some $\sig:C_p\to \Aut(T^{n})$. Let $M_\sig$ be the $\bZ[C_p]$-module corresponding to $T^{n}_\sig\le G_\sig$.
By the indecomposable decomposition of $M_\sig$, one sees that there exists a
$\bZ[C_p]$-submodule $L'$ such that $M_\sig/L' \cong \bZ$, $I$ or $(I, \nu)$.
A $\bZ[C_p]$-module $I$ is a $\bZ[C_p]$-submodule of $(I, \nu)$ and $(I, \nu)/I\cong \bZ$ with the trivial $C_p$-action. 
Consequently, there exists a $\bZ[C_p]$-submodule $L$ such that $M_\sig/L \cong \bZ$ or $I$ and such $L$ provides a $C_p$-subtorus $T_L$ which is a closed normal subgroup of $G_\sig$. If $M_\sig/L \cong \bZ$, then it follows that 
$G_\sig/T_L \cong G_\bZ\cong S^1\times C_p.$
Next suppose that $M_\sig/L \cong I$. It then follows that
$G_\sig/T_L \cong G_I.$
Let $I_0 = \bZ[\xi_p]$ with $\bZ$-basis 
\[\{\xi_p, \xi_p^2, \dots, \xi_p^{p-1}\}.\]
Since the action of the generator $a\in C_p$ on $I_0$ is given by
the multiplication of $\xi_p$, the automorphism induced by the action of $a$ on $I_0$ is represented by the matrix
(\ref{e2-3}).  Hence it follows that $G_{I_0}$ is isomorphic to $\Ga_p$.
Since any ideal class is represented by an integral ideal of $\bZ[\xi_p]$, one may assume that 
$I \subset I_0$ and $I$ is a $\bZ[C_p]$-submodule of $I_0$, and $I$ and $I_0$ have the same $\bZ$-rank $p-1$. Therefore the inclusion $\iota: I \to I_0$ induces an $\bR[C_p]$-isomorphism 
\[\varphi:=\bR\otimes_\bZ \iota: \bR\otimes_{\bZ} I\to \bR\otimes_{\bZ} I_0\] 
such that $\varphi(I)\subset I_0$.
Note that $\bR\otimes_{\bZ} I \cong \bR\otimes_{\bZ} I_0 \cong \bR^{p-1}$ as $\bR$-vector spaces.
Then $\varphi$ induces a surjective $C_p$-homomorphism
\[\ol \varphi: T_I=\bR\otimes_{\bZ} I/I \cong T^{p-1} \to T_{I_0}=\bR\otimes_{\bZ} I_0/I_0\cong T^{p-1}\]
between $C_p$-tori.
This also induces a surjective homomorphism
$f: G_I\to G_{I_0}$ by setting $f(ta^k)= \ol \varphi(t)a^k$ for $t\in T^{p-1}$ and $0\le k\le p-1$. 
Since the kernel $\Ker f = \Ker \ol\varphi$ is a finite $C_p$-subgroup, it follows that $\Ker f$ is a finite normal subgroup of $G_I$. This implies that 
$G_I/\Ker f \cong G_{I_0}\cong \Ga_p$.  
Thus the proof of Proposition \ref{p2-2} is completed.
\end{proof}

\section{The case of $\Gamma_p$}
 So far we have shown that Proposition \ref{p1-1} is reduced to the cases of $\Ga_p$ and $S^1\times C_p$.
In this section, we shall show that  $\Gamma_p$ for any prime $p$ is an anti-BU-group of both types I and \II.
First observe the following:
\begin{proposition}\label{p3-1} 
If a compact Lie group $G$ is an anti-BU-group of type I, then $G$ is an anti-BU-group of type \II.
\end{proposition}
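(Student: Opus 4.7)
The plan is to convert the given $G$-map witnessing that $G$ is an anti-BU-group of type I into a $G$-map between equidimensional representation spheres with degree zero, which directly witnesses that $G$ is an anti-BU-group of type \II. By hypothesis there are fixed-point-free orthogonal $G$-representations $V$ and $W$ and a $G$-map $f: S(V) \to S(W)$ with $m := \dim V > \dim W =: k$ (so necessarily $m, k \ge 1$). I aim to produce fixed-point-free representations $V'$, $W'$ of the same dimension together with a $G$-map $g: S(V') \to S(W')$ of degree zero.

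My candidates are $V' := V^{\oplus k}$ and $W' := W^{\oplus m}$: both are fixed-point-free, as direct sums of fixed-point-free representations, and both have the common dimension $mk$. From $f$ I would form the $k$-fold iterated join
\[
f^{*k} : S(V^{\oplus k}) \to S(W^{\oplus k}),
\]
using the standard $G$-equivariant homeomorphism $S(A) * S(B) \cong S(A \oplus B)$, under which the diagonal action on the join matches the direct-sum action on the ambient representation, together with the fact that joins of $G$-maps are $G$-maps.

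I would then compose with the $G$-equivariant inclusion
\[
\iota : S(W^{\oplus k}) \hookrightarrow S\bigl(W^{\oplus k} \oplus W^{\oplus(m-k)}\bigr) = S(W^{\oplus m}), \qquad w \mapsto (w, 0),
\]
and set $g := \iota \circ f^{*k}$. Since the image of $g$ lies in the proper great subsphere $S(W^{\oplus k}) \subset S(W^{\oplus m})$ (of positive codimension $k(m-k)$), the map $g$ fails to be surjective; as a non-surjective map between spheres of the same positive dimension, it is null-homotopic, and hence $\deg g = 0$. This exhibits $G$ as an anti-BU-group of type \II.

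I do not foresee a real obstacle: the construction is entirely formal. The only bookkeeping is to verify equivariance of the join and of the inclusion, to check that $V^{\oplus k}$ and $W^{\oplus m}$ have matching dimension and are fixed-point-free (both immediate), and to invoke the elementary fact that a non-surjective map between equidimensional spheres of positive dimension has degree zero.
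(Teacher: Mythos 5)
Your proposal is correct and follows essentially the same route as the paper: the paper also forms the $(\dim W)$-fold join of $f$ and composes with the inclusion $S(wW)\hookrightarrow S(vW)$, concluding $\deg = 0$ because the map factors through a sphere of strictly smaller dimension. Your non-surjectivity phrasing is just a restatement of the paper's ``dimensional reason.''
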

\begin{proof}
Suppose that there exists a $G$-map $f: S(V)\to S(W)$ with 
\[v:=\dim V>w:=\dim W\]
 for some fixed-point-free representations. 
Then one can define a $G$-map
\[h: S(wV) \overset{* f}{\to} S(wW) \overset{\text{incl.}}{\to} S(vW),\]
where $* f$ denote the $w$-fold join of $f:S(V)\to S(W)$.
Since 
\[\dim wV = \dim vW= vw> \dim wW = w^2,\] 
it follows that $\deg h=0$ for dimensional reason. Thus $G$ is an anti-BU-group of type \II.
\end{proof}

In order to construct a $\Ga_p$-map $f: S(V) \to S(W)$ for some representations $V$, $W$ with $\dim V>\dim W$, we use the following fact from equivariant obstruction theory. 

\begin{proposition}\label{p3-2}
Let $G$ be a compact Lie group and $W$ a $G$-representation. Let $X$ be a finite $G$-CW complex and $A$ a $G$-subcomplex of $X$ (possibly empty).
Suppose that there exists a $G$-map $f_A : A\to S(W)$. Let $Y= X\smallsetminus A$. If 
\begin{equation}
\dim Y^H/W_G(H) \le \dim S(W)^H
\end{equation}
for any isotropy subgroup $H$ of $Y$, then there exists a $G$-map $f: X\to S(W)$ extending $f_A$. Here $W_G(H)$ denotes $N_G(H)/H$, and $N_G(H)$ is the normaliser of $H$ in $G$.
\end{proposition}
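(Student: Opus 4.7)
The plan is to prove Proposition \ref{p3-2} by the standard induction in equivariant obstruction theory: construct the $G$-map $f$ stratum by stratum, ordering strata of $Y=X\smallsetminus A$ by isotropy type, and at each step reduce to a classical extension problem on an orbit space.

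Since $X$ is a finite $G$-CW complex, only finitely many isotropy conjugacy classes $(H_1),\dots,(H_k)$ occur on $Y$. Order them so that whenever $H_i$ is subconjugate to $H_j$ one has $j\le i$; thus cells with larger isotropy (smaller orbits) are attached first. Set $X_0=A$, and define $X_i$ inductively by adjoining to $X_{i-1}$ the union of all orbits of type $(H_i)$ in $Y$. The chosen ordering guarantees that the boundary of any cell of type $(H_i)$ already lies in $X_{i-1}$, so each $X_i$ is a $G$-subcomplex of $X$. It therefore suffices to extend $f_{i-1}\colon X_{i-1}\to S(W)$ to $f_i\colon X_i\to S(W)$ at every stage $i$.

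Fix $H=H_i$. The canonical homeomorphism $G\times_{N_G(H)}X_i^H\to G\cdot X_i^H$ shows that producing the $G$-equivariant extension over $X_i$ is equivalent to producing an $N_G(H)$-equivariant extension of $f_{i-1}|_{X_{i-1}^H}$ over $X_i^H$, so we may pass to $H$-fixed points. On $X_i^H\smallsetminus X_{i-1}^H$ every point has isotropy conjugate to $H$ inside $G$ and also containing $H$; hence, since a closed subgroup of a compact Lie group conjugate to $H$ and containing $H$ must equal $H$, the isotropy is exactly $H$, so $H$ acts trivially and $W_G(H)=N_G(H)/H$ acts freely. Dividing by this free action reduces the question to extending a continuous map $X_{i-1}^H/W_G(H)\to S(W)^H$ across the relative CW pair $(X_i^H/W_G(H),\,X_{i-1}^H/W_G(H))$, whose relative dimension is bounded by $\dim Y^H/W_G(H)$.

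Classical non-equivariant obstruction theory now closes the argument. Writing $n=\dim S(W)^H$, the sphere $S(W)^H$ is $(n-1)$-connected, so the obstructions lie in $H^{q+1}(-;\pi_q(S(W)^H))$ and vanish automatically for $q<n$; for $q\ge n$ they vanish because the relative CW dimension does not exceed $n$, hence $H^{q+1}=0$. (The degenerate case $S(W)^H=\emptyset$ corresponds to $n=-1$, under which the hypothesis forces $Y^H=\emptyset$ and there is nothing to extend.) The main conceptual step is the reduction to the $W_G(H)$-free problem on the $H$-fixed set; the only bookkeeping worry is the compatibility of successive extensions along attaching maps, which is automatic from the chosen order of isotropy classes.
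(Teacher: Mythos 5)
Your argument is the standard orbit-type induction that underlies the reference the paper itself gives for this statement (tom Dieck, Ch.~II.3); the paper's own ``proof'' is just that citation plus the remark that $S(W)^H$ is $(\dim S(W)^H-1)$-connected, so you are taking essentially the same route, only written out. Two steps in your write-up are imprecise as stated, though both are repairable without changing the structure. First, the map $G\times_{N_G(H)}X_i^H\to G\cdot X_i^H$ is \emph{not} a homeomorphism in general: it fails to be injective at points of $X_i^H$ whose isotropy strictly contains $H$ (e.g.\ a $G$-fixed point gives $G/N_G(H)$ on the left and a point on the right). The correct and sufficient statement is that $G\times_{N_G(H)}\bigl(X_i^H\smallsetminus X_{i-1}^H\bigr)\to X_i\smallsetminus X_{i-1}$ is a homeomorphism, which is exactly the part you need since the extension is already given on $X_{i-1}$. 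Second, dividing by the free $W_G(H)$-action does not literally reduce the problem to extending a plain continuous map $X_{i-1}^H/W_G(H)\to S(W)^H$, because $W_G(H)$ may act nontrivially on $S(W)^H$; the correct reduction is to a section-extension problem for the associated bundle $X_i^H\times_{W_G(H)}S(W)^H\to X_i^H/W_G(H)$, whose obstructions lie in $H^{q+1}\bigl(X_i^H/W_G(H),X_{i-1}^H/W_G(H);\pi_q(S(W)^H)\bigr)$ with possibly twisted coefficients. Since your vanishing argument uses only the $(n-1)$-connectivity of $S(W)^H$ and the bound on the relative dimension, it is insensitive to the twisting, so the conclusion stands once these two points are phrased correctly.
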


\begin{proof}
This is a consequence of equivariant obstruction theory  \cite[Chapter \II, 3]{tD}. indeed, since $S(W)^H$ is $(\dim S(W)^H-1)$-connected, there are no obstructions to an extension of the $G$-map $f_A$. 
\end{proof}

We first consider the case of $p=2$. Then $\Ga_2$ is isomorphic to the orthogonal group $\orth(2)$ in dimension $2$. 
The orthogonal group $\orth(2)$ has the orthogonal $2$-dimensional irreducible $\orth(2)$-representations $U'_k$, $k\in \bZ$, whose
underling space is $\bR^2\cong\bC$ and $t\in S^1$ acts by $t\cdot z = t^kz$, $z\in \bC$, and the generator $a\in C_2$ acts by
$a\cdot z = \bar z$ the complex conjugate. 
There are $1$-dimensional $\orth(2)$-representations $\bR$ and $V'_1$, where $\bR$ is the trivial representation and $V'_1$ is given by the lift of the non-trivial $1$-dimensional $C_2$-representation, i.e., $S^1$ acts trivially on $V'_1$ and $a=-1$ acts by $a\cdot x =-x$, $x\in V'_1$.
The following shows that $\Ga_2$ is an anti-BU-group of type I, and hence of type \II.
\begin{lemma}\label{l3-3}
There exists a $\Ga_2$-map $f: S(U'_1\oplus U'_1)\to S(U'_2\oplus V'_1)$.
\end{lemma}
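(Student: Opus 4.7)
The plan is to produce an explicit polynomial $\Gamma_2$-equivariant map $f : V \to W$ with $V = U'_1 \oplus U'_1$ and $W = U'_2 \oplus V'_1$, show that it has no zero on $S(V)$, and normalise it to obtain the required $\Gamma_2$-map $S(V) \to S(W)$. This gives a direct construction that bypasses the obstruction-theoretic Proposition \ref{p3-2}, though that framework could also be applied after first extending across a suitable $G$-subcomplex of $S(V)$ (the union of the reflection-fixed circles).

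Identifying $V = \bC^2$ with coordinates $(z_1, z_2)$ and $W = \bC \oplus \bR$, the map I would take is
\[
f(z_1, z_2) = \bigl(\, z_1^2 + z_2^2,\ 2\,\mathrm{Im}(z_1 \ol{z_2}) \,\bigr).
\]
The motivation is the real $\Gamma_2$-decomposition $U'_1 \otimes_{\bR} U'_1 \cong \bR \oplus U'_2 \oplus V'_1$: the polynomial $z_1^2 + z_2^2$ has $S^1$-weight $2$ and is fixed by componentwise conjugation, so it transforms as $U'_2$; the polynomial $\mathrm{Im}(z_1 \ol{z_2})$ is $S^1$-invariant and reverses sign under $a$, so it transforms as $V'_1$. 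Equivariance of $f$ is then immediate from coordinate-wise inspection, and needs no further comment.

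The central step is to verify that $f$ does not vanish on $S(V)$. I would argue that vanishing of the second coordinate forces $z_1 \ol{z_2} \in \bR$, so either one of $z_1, z_2$ is zero, or else $z_1$ and $z_2$ share the same argument modulo $\pi$. In the latter case one writes $z_j = r_j e^{i\theta}$ with $r_j \in \bR$, giving $z_1^2 + z_2^2 = (r_1^2 + r_2^2) e^{2i\theta}$, which is nonzero on $S(V)$ since $r_1^2 + r_2^2 = 1$; and if one of $z_j$ is zero, the first coordinate reduces to $z_k^2$ for $k \ne j$, again nonzero on $S(V)$. Hence the two coordinates cannot vanish simultaneously, and $\tilde f := f / \|f\|$ is the desired $\Gamma_2$-map.

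The main pitfall is the choice of formula: the more obvious weight-$2$ candidate $f_1 = z_1 z_2$ vanishes on the coordinate axes of $S(V)$, precisely where $\mathrm{Im}(z_1 \ol{z_2})$ also vanishes, so the symmetric sum $z_1^2 + z_2^2$ must be used instead of the product. Once this choice is made, the remaining verifications are routine.
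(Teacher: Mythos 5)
Your construction is correct, and it takes a genuinely different route from the paper. You give an explicit quadratic equivariant map $f(z_1,z_2)=(z_1^2+z_2^2,\,2\,\mathrm{Im}(z_1\ol{z_2}))$: the equivariance checks are right (weight $2$ and compatibility with conjugation for the first coordinate; $S^1$-invariance and sign reversal under $a$ for the second), and the nonvanishing argument is sound --- indeed one can see it even more directly, since $z_1^2+z_2^2=0$ forces $z_1=\pm\sqrt{-1}\,z_2$ and hence $\mathrm{Im}(z_1\ol{z_2})=\pm|z_2|^2\ne 0$ on the sphere; in fact $|z_1^2+z_2^2|^2+4\,\mathrm{Im}(z_1\ol{z_2})^2=(|z_1|^2+|z_2|^2)^2$, so your map already carries $S(V)$ into $S(W)$ without normalisation. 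The paper instead proceeds by equivariant obstruction theory: it identifies the isotropy types of $S(V)$ as $(1)$ and $(C_2)$, builds a $W_{\Ga_2}(C_2)$-equivariant constant map on the one-dimensional free $K$-sphere $S(V)^{C_2}$ into $S(W)^{C_2}\cong S^0$, extends it over the orbit $\Ga_2 S(V)^{C_2}$, and then invokes Proposition \ref{p3-2} on the free part using $\dim S(V)/\Ga_2=2=\dim S(W)$. Your approach is more elementary and fully constructive, which is a genuine gain for $p=2$; the paper's method has the advantage that it is the same template used for $\Ga_p$ with $p$ odd (and for $S^1\times C_p$), where explicit polynomial maps of this kind are not readily available.
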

\begin{proof}
Set $V = U'_1\oplus U'_1$ and $W= U'_2\oplus V'_1$.
Then $S(V)$ has two isotropy types $(1)$ and $(C_2)$. 
In fact, the set $\Iso(S(U'_1))$ of isotropy subgroups of $U'_1$ consists of the subgroups $\gr{ta}$, $t\in S^1$. Note that $\gr{ta}$ is conjugate to $C_2$. 
For any $x=(z,w)\in S(U'_1\oplus U'_1)$, the isotropy subgroup $G_x$ of $x$ is equal to $G_z\cap G_w$ for $z$, $w \in U'_1$. 
This deduces that the set of conjugacy classes of isotropy subgroups of $S(V)$ is  
\[\Iso(S(V))/\Ga_2 =\{(1), (C_p)\}.\]
Let $K=W_{\Ga_2}(C_2)$. Since the normaliser $N_{\Ga_2}(C_2)$ is 
$Z\times C_2$, where $Z=\{\pm 1\}$ is the centre of $\Ga_2$, hence $K \cong Z$. Then $S(V)^{C_2}$ is a free $K$-sphere of dimension $1$. Indeed, $V^{C_2}\cong \bR^2$ and $K$ acts antipodally on $S(V)^{C_2}$. On the other hand, $S(W)^{C_2}=S(U_2')^{C_2}\cong S^0$ has the trivial $K$-action. 
Take a constant map 
\[f_{C_2}: S(V)^{C_2} \to S(W)^{C_2}, \]
which is $K$-equivariant. 
Taking the $\Ga_2$-orbits of $S(V)^{C_2}$ and $S(W)^{C_2}$, 
one obtains a (well-defined) $\Ga_2$-map  
\[f_{(C_2)} : \Ga_2S(V)^{C_2} \to \Ga_2S(W)^{C_2}\]
which is defined by 
$f_{(C_2)}(gx) = gf_{C_p}(x)$ for $g\in \Ga_2$, $x\in S(V)^{C_2}.$

One sees that $S(V)\smallsetminus \Ga_2S(V)^{C_2}$ is a free $\Ga_2$-space and
$\dim S(V)/\Ga_2 = 2 = \dim S(W)$. By Proposition \ref{p3-2}, 
there exists a $\Ga_2$-map $f: S(V)\to S(W)$.
\end{proof}
A similar argument is valid for $\Ga_p$, where $p$ is an odd prime. 
We shall summarise facts on $\Ga_p$ here.
\begin{lemma}\label{l3-4}
Let $\Ga_p = T^{p-1}\rtimes_\rho C_p$ as before.
\begin{enumerate}
\item 
For any $t\in T^{p-1}$, the order of $ta$ is $p$ and $\gr{ta}$ is conjugate to $C_p=\gr{a}$.
\item
The centre $Z(\Ga_p)$ of $\Ga_p$ is 
\[Z(\Ga_p)=\gr{(\xi_p, \xi_p^2, \dots, \xi_p^{p-1})} \le T^{p-1}.\]
\item
The normaliser of $C_p$ is $N_{\Ga_p}(C_p) = Z(\Ga_p)\times C_p$. 

\end{enumerate}
\end{lemma}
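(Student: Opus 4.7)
The unifying idea is to translate everything about $\Gamma_p$ into linear algebra on $M_0 := \mathbb{Z}^{p-1}$ via the matrix $A$ of (\ref{e2-3}), using the key fact that the minimal polynomial of $A$ is $\Phi_p(x) = 1+x+\cdots+x^{p-1}$. In particular $A$ satisfies
\[
I + A + A^2 + \cdots + A^{p-1} = 0, \qquad \det(I-A) = \Phi_p(1) = p,
\]
and so $I-A$ is invertible over $\mathbb{Q}$.

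For (1), I would first compute $(ta)^p$ by repeatedly applying $at = \rho(a)(t)a$ to push the $a$'s to the right:
\[
(ta)^p \;=\; t\cdot \rho(a)(t)\cdot \rho(a)^2(t)\cdots \rho(a)^{p-1}(t)\cdot a^p \;=\; \prod_{i=0}^{p-1} \rho(a)^i(t).
\]
Lifting $t$ to a vector $x\in\mathbb{R}^{p-1}$, this product is the image under the exponential map of $(I+A+\cdots+A^{p-1})x = 0$, hence $(ta)^p = 1$. Since $ta\notin T^{p-1}$, the order is exactly $p$. For the conjugacy claim, I want $s\in T^{p-1}$ with $s a s^{-1} = ta$, i.e.\ $s\cdot \rho(a)(s)^{-1} = t$. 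Lifting to the Lie algebra, this is the equation $(I-A)y = x \pmod{\mathbb{Z}^{p-1}}$; since $I-A$ is invertible on $\mathbb{R}^{p-1}$, the induced map $I-A\colon T^{p-1}\to T^{p-1}$ is surjective and such an $s$ exists.

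For (2), take a central element $ta^k$ with $t\in T^{p-1}$. Conjugating an arbitrary $s\in T^{p-1}$ gives $\rho(a)^k(s) = s$ for all $s$, forcing $\rho(a)^k=\mathrm{id}$ and hence $k=0$. So the centre sits inside $(T^{p-1})^{C_p}$. That fixed-point subgroup is the kernel of $I-A\colon T^{p-1}\to T^{p-1}$, which has order $|\det(I-A)|=p$. A direct calculation using the formula in (\ref{e2-2}) shows that $\rho(a)$ sends $(\xi_p,\xi_p^2,\dots,\xi_p^{p-1})$ to itself (the first coordinate becomes $\xi_p^{-(p-1)}=\xi_p$, and each subsequent coordinate $\xi_p^i\cdot\xi_p=\xi_p^{i+1}$). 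Since this element has order $p$, it generates the entire fixed-point subgroup.

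For (3), clearly $C_p\cup Z(\Gamma_p) \subseteq N_{\Gamma_p}(C_p)$. Conversely, if $ta^k\in N_{\Gamma_p}(C_p)$, conjugating $a$ gives $(ta^k)a(ta^k)^{-1} = tat^{-1}\in C_p$. Projecting to $\Gamma_p/T^{p-1}\cong C_p$ forces $tat^{-1}=a$, i.e.\ $t\rho(a)(t)^{-1}=1$, so $t\in(T^{p-1})^{C_p}=Z(\Gamma_p)$ by (2). Hence $N_{\Gamma_p}(C_p) = Z(\Gamma_p)\cdot C_p$; since $Z(\Gamma_p)\cap C_p=\{1\}$ (the extension splits) and the two factors commute, this product is direct.

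The only non-routine step is the centrality calculation showing $\rho(a)$ fixes $(\xi_p,\ldots,\xi_p^{p-1})$; everything else reduces to the observation that $A$ has $\Phi_p$ as its minimal polynomial, which I would invoke up front.
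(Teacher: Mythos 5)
Your proof is correct, and its overall framework---identifying $\Aut(T^{p-1})$ with $\GL_{p-1}(\bZ)$ and reducing everything to the matrix $A$---is the same as the paper's; the difference is in how the two nontrivial computations are executed. For the conjugacy of $\gr{ta}$ to $C_p$, the paper writes down an explicit conjugator $s=(s_1,\dots,s_{p-1})$ with $s_k = t_1^{(p-k)/p}\cdots t_k^{(p-k)/p}t_{k+1}^{-k/p}\cdots t_{p-1}^{-k/p}$ and verifies $s(s^A)^{-1}=t$ directly, whereas you solve $(I-A)y\equiv x \pmod{\bZ^{p-1}}$ abstractly, using that $\det(I-A)=\Phi_p(1)=p\ne 0$ makes $I-A$ surjective on the torus. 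Likewise for the centre: the paper solves $t^A=t$ coordinate-wise to get $t_k=t_1^k$ and $t_1^p=1$, while you count $\lvert\ker(I-A\colon T^{p-1}\to T^{p-1})\rvert=\lvert\det(I-A)\rvert=p$ and exhibit one fixed element of order $p$. Your route is less computational and makes the role of the cyclotomic polynomial explicit up front, at the cost of not producing explicit conjugators; you are also slightly more careful than the paper in ruling out elements $ta^k$ with $k\ne 0$ from the centre and in handling general elements of the normaliser in (3). Both arguments are complete and correct.
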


\begin{proof}
(1) The automorphism $\rho(a)\in \Aut(T^{p-1})$ is represented by the matrix
$A$ in (\ref{e2-1}).
More generally, $\rho(a^i)$, $1\le i\le p-1$, is represented by
\begin{equation}\label{f3-1}
A^i=\begin{pmatrix}
0&\cdots&0&-1&1&&0\\
\vdots&&\vdots&\vdots&&\ddots&\\
0&\cdots&0&-1&0&&1\\
0&\cdots&0&-1&0&\cdots&0\\
1&&0&-1&0&\cdots&0\\
&\ddots&&\vdots&\vdots&&\vdots\\
0&&1&-1&0&\cdots&0
\end{pmatrix}
\end{equation}
where the $(p-i)$-th column is ${}^t(-1,\dots, -1)$, and $A^p=I$.

Let $X\in \GL_{p-1}(\bZ)$ and $\phi\in \Aut(T^{p-1})$  the automorphism induced by $X$.
Set  $t^X=\phi(t)$ for $t\in T^{p-1}$. When $X=(x_{ij})$, one sees that
\[t^X = (t_1^{x_{11}}t_2^{x_{12}}\cdots t_{p-1}^{x_{1\,p-1}}, \dots\dots, t_1^{x_{p-1\,1}}t_2^{x_{p-1\,2}}\cdots t_{p-1}^{x_{p-1\,p-1}}),\]
and $t^{X+Y} = t^Xt^Y$ for matrices $X$, $Y\in \GL_{p-1}(\bZ)$.

For any $t\in T^{p-1}$, since $at = \rho(a)(t)a = t^Aa$, it follows that
\[(ta)^p = t^{I+A+\cdots+A^{p-1}}a^p = t^O = 1.\] 
Therefore the order of $ta$ is $p$. 
Next we show that $\gr{ta}$ is conjugate to $C_p$. Indeed, 
we set 
\[s_k = t_1^{(p-k)/p}\cdots t_k^{(p-k)/p}t_{k+1}^{-k/p}\cdots t_{p-1}^{-k/p}\in S^1\]
for $1\le k\le p-1$, and $s=(s_1,\dots, s_{p-1})\in T^{p-1}$. 
Then $sas^{-1}=s(s^A)^{-1}a$, and 
\[s(s^A)^{-1}=(s_1s_{p-1}, s_2s_1^{-1}s_{p-1}, \dots, s_{p-1}s_{p-2}^{-1}s_{p-1}).\]
By a direct computation, one can see that 
$s(s^A)^{-1}=t$ and $sas^{-1}=ta$. Thus $\gr{ta}$ is conjugate to $C_p$.

(2) Since 
$ata^{-1}=t$ if and only if $t^A=t$, it follows that
\[t_{p-1}^{-1} = t_1,\quad t_1t_{p-1}^{-1}=t_2, \dots\dots, t_{p-2}t_{p-1}^{-1}=t_{p-1}.\]
These equations imply that $t_k = t_1^k$, $	1 \le k\le p-1$, and $t_1^p=1$.
Thus
\[Z(\Ga_p)=\gr{(\xi_p, \xi_p^2, \dots, \xi_p^{p-1})}.\]

(3) Clearly $C_p\le N_{\Ga_p}(C_p)$. If $t\in N_{\Ga_p}(C_p)$ for $t\in T^{p-1}$, then $t^{-1}at = a^k$ for some $1\le k \le p-1$, and this implies that $t^{-1}ata^{-1} = t^{-1}t^A= a^{k-1}\in T^{p-1}\cap C_p = \{1\}$, and so $k=1$ and $t^{-1}t^A =1$. Therefore $t\in Z(\Ga_p)$. Thus the desired result holds.
\end{proof}

The irreducible unitary $\Ga_p$-representations are obtained from the argument of \cite{S}.
Consequently, these are given by the induced representations of non-trivial irreducible $T^{p-1}$-representations and the lifts of irreducible $C_p$-representations.
In the following, we only consider specific representations below.
For any $k\in \bZ\smallsetminus\{0\}$, an irreducible unitary $\Ga_p$-representation $U_k$ is given by
\[U_k = \Ind_{T^{p-1}}^{\Ga_p}\ol U_k, \]
where $\ol U_k$ is a $1$-dimensional unitary $T^{p-1}$-representation on which $t=(t_1,\dots, t_{p-1})\in T^{p-1}$ acts by $t\cdot z = t_1^kz$ for $z\in \ol U_k$. 
Regarding $U_k$ as a direct sum $\oplus_{i=0}^{p-1}a^i\ol U_k$, one sees  that
$a$ acts by permutation of components: $a\cdot a^i\ol U_k = a^{i+1}\ol U_k$, and $t$ acts on $a^i\ol U_k$ by 
\[
t\cdot w_i = a^i(t^{A^{-i}})_1^kz_i \in a^i\ol U_k\]
for $w_i =a^iz_i\in a^i\ol U_k$, $z_i\in \ol U_k$, where $(t^{A^{-i}})_1$ denotes the first component of $t^{A^{-i}}\in T^{p-1}$. More concretely, it follows from the matrix (\ref{f3-1}) that, for any $w=(w_0, w_1,\dots, w_{p-1})\in \oplus_{i=0}^{p-1}a^i\ol U_k$,  
\begin{equation}\label{f3-2}
t\cdot w = (t_1^kz_0,\  at_{1}^{-k}t_{2}^kz_1,\  \dots, a^{p-2}t_{p-2}^{-k}t_{p-1}^kz_{p-2},\  a^{p-1}t_{p-1}^{-k}z_{p-1}).
\end{equation}

For any $k\in \bZ/p$, an irreducible unitary $\Ga_p$-representation $V_k$ is given by the lift of a $1$-dimensional unitary $C_p$-representation $\ol V_k$ on which $a$ acts by $a\cdot z = \xi_p^kz$ for $z\in \ol V_k$. In particular, $T^{p-1}$ acts trivially on $V_k$ and also $V_0$ is the trivial $\Ga_p$-representation.


In general, for an arbitrary unitary $G$-representation $V$, the kernel $\Ker V$ of $V$ is defined by the kernel of the representation homomorphism $\varphi: G\to \text{U}(n)$ of $V$, or equivalently, $\Ker V$ is the closed subgroup consisting of elements $g\in G$ trivially acting on $V$. We note the following:

\begin{lemma}\label{l3-5}
\begin{enumerate}
\item 
For any $k\ge 1$, $\Ker U_k = \Ker U_{-k}=\bZ_k{}^{p-1} \le T^{p-1}$, where $\bZ_k= \gr{\xi_k}\le S^1$ and for any $k\in \bZ/p\smallsetminus \{0\}$, $\Ker V_k = T^{p-1}$.
\item
The centre $Z(\Ga_p)$ is a subgroup of $\Ker U_p$.
\end{enumerate}
\end{lemma}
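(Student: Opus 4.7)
The plan is to extract the kernels directly from the explicit formulas already set up in the excerpt, namely (\ref{f3-2}) for $U_k$ and the definition of $V_k$ as a lift.

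First I would dispatch (1) for the representations $V_k$. By construction, $V_k$ is the lift of a $1$-dimensional $C_p$-representation along the projection $\Ga_p\to C_p$, so $T^{p-1}$ lies in the kernel automatically. Since $a$ acts by multiplication by $\xi_p^k\ne 1$ when $k\not\equiv 0\pmod p$, no element outside $T^{p-1}$ is in the kernel, giving $\Ker V_k=T^{p-1}$.

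Next I would handle $U_k$. A preliminary observation is that any $\eta=ta^j$ with $j\ne 0$ acts on $U_k=\bigoplus_{i=0}^{p-1}a^i\ol U_k$ via a nontrivial cyclic permutation of the $p$ summands (combined with a scalar action on each), so $\eta$ cannot act trivially. Hence $\Ker U_k\subseteq T^{p-1}$, and the problem reduces to a pointwise calculation inside $T^{p-1}$. Applying formula (\ref{f3-2}) component by component, $t\in\Ker U_k$ if and only if
\[t_1^k=1,\quad t_i^{-k}t_{i+1}^k=1\ (1\le i\le p-2),\quad t_{p-1}^{-k}=1.\]
Solving this telescoping system yields $t_i^k=1$ for every $i$, i.e.\ $t\in\bZ_k^{\,p-1}$, and conversely every such $t$ clearly acts trivially. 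The same computation with $k$ replaced by $-k$ produces the identical condition (only $|k|$ enters), so $\Ker U_{-k}=\bZ_k^{\,p-1}$ as well.

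Finally, part (2) is an immediate check using Lemma \ref{l3-4}(2): the centre $Z(\Ga_p)$ is generated by $(\xi_p,\xi_p^2,\ldots,\xi_p^{p-1})\in T^{p-1}$, and each coordinate $\xi_p^j$ has $p$-th power equal to $1$, so the generator lies in $\bZ_p^{\,p-1}=\Ker U_p$ by part (1). There is essentially no obstacle in this lemma — the only subtlety is justifying that elements of $\Ga_p$ not in $T^{p-1}$ cannot act trivially on $U_k$, which follows from the permutation-of-summands structure of the induced representation and is the one step I would be careful to spell out.
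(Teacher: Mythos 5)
Your proof is correct and follows the same route as the paper: the paper likewise reads off $\Ker U_k$ from formula (\ref{f3-2}), treats $\Ker V_k$ as immediate from the definition of the lift, and deduces (2) from Lemma \ref{l3-4}(2) together with part (1). You simply spell out the telescoping computation and the permutation-of-summands argument that the paper leaves implicit.
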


\begin{proof}
(1) By formula (\ref{f3-2}), the first result is verified. The second result is trivial by definition of $V_k$.

(2) Since $Z(\Ga_p) = \gr{(\xi_p, \dots, \xi_p^{p-1})}$ by Lemma \ref{l3-4}, this is clear by (1).
\end{proof}
\begin{remark}\label{rem-2}
The $C_p$-homomorphism $\varphi: T^{p-1} \to T^{p-1}$, $f(t)=t^k$ induces the isomorphism $\Ga_p/\bZ_k{}^{p-1}\cong \Ga_p$. 
Since $\Ker U_k = \bZ_k{}^{p-1}$, the fixed-point representation $U_k^{\bZ_k{}^{p-1}}$ is regarded as a $\Ga_p$-representation and then $U_k^{\bZ_k{}^{p-1}} \cong U_1$ as $\Ga_p$-representations. Conversely, $U_k$ is regarded as the lift of
$U_1$ by the projection $q: \Ga_p \to \Ga_p/\bZ_k{}^{p-1}\cong \Ga_p$.
\end{remark}
Next we summarise some facts on the isotropy subgroups of $S(U_1)$ and $S(V_1)$.
\begin{lemma}\label{l3-6}
\begin{enumerate}
\item 
Any subgroup $K$ conjugate to $C_p$ is a maximal isotropy subgroup of $S(U_1)$.
\item
Any isotropy subgroup $K$ of $S(U_1)$ not conjugate to $C_p$ is a subgroup of $T^{p-1}$.
\item
For any $x\in S(V_k)$, $k\in \bZ_p\smallsetminus \{0\}$, the isotropy subgroup  $(\Ga_p)_x$ is $T^{p-1}$.
\end{enumerate}
\end{lemma}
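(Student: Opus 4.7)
The plan is to prove the three parts in the order (3), (1), (2), using the explicit action formula \eqref{f3-2} together with the structural facts from Lemmas~\ref{l3-4} and \ref{l3-5}. Part (3) is immediate: by Lemma~\ref{l3-5}(1), $T^{p-1}=\Ker V_k$ fixes every $x\in S(V_k)$, while any element $ta^i\in\Ga_p$ with $1\le i\le p-1$ acts on $V_k$ as multiplication by $\xi_p^{ik}\ne 1$ (because $i,k\not\equiv 0\pmod p$), and so cannot stabilise a nonzero vector. Hence $(\Ga_p)_x=T^{p-1}$.

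For part (1), I first realise $C_p=\gr{a}$ as an isotropy subgroup by exhibiting the diagonal point
\[
x_0=\frac{1}{\sqrt{p}}(1,1,\dots,1)\in S(U_1)\subset \bigoplus_{i=0}^{p-1}a^i\ol{U}_1.
\]
Since $a$ cyclically permutes the summands, $a\cdot x_0=x_0$. Specialising \eqref{f3-2} to $k=1$ and imposing $t\cdot x_0=x_0$ yields the cascade $t_1=1$, $t_1^{-1}t_2=1,\dots,t_{p-2}^{-1}t_{p-1}=1$, $t_{p-1}^{-1}=1$, which forces $t=1$; hence $(\Ga_p)_{x_0}=C_p$. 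For maximality, suppose $K=(\Ga_p)_y$ is any isotropy subgroup with $C_p\subseteq K$. Then $a\cdot y=y$ forces $y=\frac{1}{\sqrt{p}}(z,z,\dots,z)$ with $|z|=1$, and the same cascade (valid for any $z\ne 0$) gives $T^{p-1}_y=\{1\}$, so $K=C_p$. Maximality for conjugates of $C_p$ then follows by equivariant transport.

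For part (2), let $K=(\Ga_p)_y$ be an isotropy subgroup with $K\not\le T^{p-1}$. Then $K$ contains some $ta^i$ with $1\le i\le p-1$, and by Lemma~\ref{l3-4}(1) its cyclic subgroup $\gr{ta^i}$ is conjugate to $C_p$, say $\gr{ta^i}=gC_pg^{-1}$. Then $(\Ga_p)_{g^{-1}y}=g^{-1}Kg$ contains $C_p$, and part (1) applied to $g^{-1}y$ forces $g^{-1}Kg=C_p$; thus $K$ is conjugate to $C_p$. Contrapositively, any isotropy subgroup not conjugate to $C_p$ lies inside $T^{p-1}$.

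The only computational content is the cascade of equations on $(t_1,\dots,t_{p-1})$ derived from \eqref{f3-2}; everything else is formal manipulation of isotropy lattices and conjugation. I do not anticipate a substantial obstacle: once the fixed-point line of $C_p$ on $U_1$ is shown to have trivial residual $T^{p-1}$-stabiliser, parts (1) and (2) follow immediately from Lemma~\ref{l3-4}(1).
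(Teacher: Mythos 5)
Your proof is correct and follows essentially the same route as the paper: identify $U_1^{C_p}$ as the diagonal line, use formula \eqref{f3-2} to show its residual $T^{p-1}$-stabiliser is trivial (giving realisation and maximality of $C_p$ in one stroke), and deduce (2) by conjugating a non-toral isotropy subgroup onto $C_p$; part (3) is the same kernel observation. The only cosmetic difference is that in (2) you invoke Lemma~\ref{l3-4}(1) for an arbitrary $ta^i$ rather than picking an element mapping to $a$ as the paper does --- this needs the one-line remark that $\gr{ta^i}=\gr{(ta^i)^j}=\gr{sa}$ for $ij\equiv 1\pmod p$, but is otherwise fine.
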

\begin{proof}
(1) We may assume that $K=C_p$, since the set $\Iso(S(U_1))$ of isotropy subgroups is closed under conjugation. By definition of the $C_p$-action on $U_1$, we have 
\[U_1^{C_p} = \{(z, az \dots, a^{p-1}z)\in \oplus_{i=0}^{p-1}a^i\ol U_k\,|\,z\in \bC\}\cong \bC.\]
Let $H=(\Ga_p)_u$ be the isotropy subgroup at $u\in S(U_1)^{C_p}$.
Clearly $H\ge C_p$ and $H$ forms an extension 
\[1\to H\cap T^{p-1}\to H\to C_p\to 1.\]
Since $S(U_1)^H\ne\emptyset$ and $S(U_1)^H \subset S(U_1)^{C_p}$, it follows that $S(U_1)^H=S(U_1)^{C_p}\cong S(\bC)$.  
For any $t\in H\cap T^{p-1}$ and $u=(z, az \dots, a^{p-1}z)\in S(U_1)^H$, one sees
\[t\cdot u= (t_1z,\  at_{1}^{-1}t_{2}z,\  \dots, a^{p-2}t_{p-2}^{-1}t_{p-1}z,\  a^{p-1}t_{p-1}^{-1}z)\in S(U_1)^H
 \]
 by formula (\ref{f3-2}).
Since $t\cdot u = u$ and $z\ne 0$, it follows that $t=1$ and hence $H\cap T^{p-1}=1$. Thus $H=C_p$ and $C_p$ is a maximal isotropy subgroup.  

(2) Suppose that $H$ is an isotropy subgroup of $S(U_1)$.
If $\pi(H)= C_p$, where $\pi: \Ga_p\to C_p$ is the projection, then there exists an element $ta\in H$ for some $t\in T^{p-1}$.
Since $C_p'=\gr{ta}\le H$ is is a maximal isotropy subgroup by Lemma \ref{l3-4}. The maximality implies that $H=C_p'$ and $H$ must be conjugate to $C_p$. This contradicts that $H$ is not conjugate to $C_p$ by assumption. It thus follows that $\pi(H)= 1$ and so $H$ is a subgroup of $T^{p-1}$.

(3) Since $\Ker V_k = T^{p-1}$ and $C_p$ acts freely on $S(V_k)$, it follows that
$(\Ga_p)_x = T^{p-1}$.
\end{proof}

\begin{remark}\label{rem-3}
Any isotropy subgroup of $S(U_1)$ included in $T^{p-1}$ is isomorphic to an $m$-torus $T^m$ for some $0\le m <p-1$. 
\end{remark}

The proof of Proposition \ref{p3-1} is finished by the next lemma.
\begin{lemma}
For any odd prime $p$, 
there exists a $\Ga_p$-map 
\[f: S(U_1\oplus U_1)\to S(U_p\oplus (p-1)V_1).\]
\end{lemma}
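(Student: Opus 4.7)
The plan is to imitate the two-step construction in Lemma \ref{l3-3}, with $C_p$ now playing the role that $C_2$ played for $\Ga_2$. A dimension count first: $\dim_\bR V = 4p$ while $\dim_\bR W = 2p + 2(p-1) = 4p - 2$, so any $\Ga_p$-map $f \colon S(V) \to S(W)$ will suffice. I will split $S(V)$ along the singular set $A := \Ga_p \cdot S(V)^{C_p}$, define $f$ on $A$ explicitly, then extend across $Y := S(V) \smallsetminus A$ using Proposition \ref{p3-2}.

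For the singular part, formula (\ref{f3-2}) together with Lemma \ref{l3-5} gives $V^{C_p} = (U_1^{C_p})^2 \cong \bC^2$ and $W^{C_p} = U_p^{C_p} \cong \bC$ (note $V_1^{C_p} = 0$, and $a$ permutes the $p$ summands of $U_p$ in the same manner as on $U_1$). By Lemma \ref{l3-4}(3), $W_{\Ga_p}(C_p) = Z(\Ga_p) \cong C_p$, and by Lemma \ref{l3-5}(2), $Z(\Ga_p) \le \Ker U_p$; hence $W_{\Ga_p}(C_p)$ acts trivially on $S(W)^{C_p} \cong S^1$. A constant map $f_{C_p} \colon S(V)^{C_p} \cong S^3 \to S(W)^{C_p}$ is therefore $W_{\Ga_p}(C_p)$-equivariant, and extends by $\Ga_p$-translates to a $\Ga_p$-map $f_{(C_p)} \colon A \to \Ga_p S(W)^{C_p} \subset S(W)$, exactly as in the proof of Lemma \ref{l3-3}.

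To extend via Proposition \ref{p3-2}, I first observe that by the maximality assertion of Lemma \ref{l3-6}(1) together with the isotropy rule $(\Ga_p)_{(z,w)} = (\Ga_p)_z \cap (\Ga_p)_w$, every isotropy subgroup $H$ occurring on $Y$ projects trivially to $C_p$ and hence satisfies $H \le T^{p-1}$. Writing $U_1|_{T^{p-1}} = \bigoplus_{i=0}^{p-1} \chi_i$ and $U_p|_{T^{p-1}} = \bigoplus_{i=0}^{p-1} \chi_i^p$, and setting $k = \dim_\bC U_1^H$ and $m = \dim_\bC U_p^H$, one has $m \ge k$ and $V_1^H = V_1$; the required inequality $\dim Y^H / W_{\Ga_p}(H) \le \dim S(W)^H$ then reduces to the elementary estimate $4k + \dim H \le 2m + 3p - 3$.

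The main obstacle will be this case-by-case verification. The decisive case is $H = 1$, where the estimate becomes $3p \le 4p - 3$ and forces $p \ge 3$; this is where the odd-prime hypothesis enters. For nontrivial $H$ the key structural input is that the weights $\chi_0, \dots, \chi_{p-1}$ sum to zero and any $p - 1$ of them are $\bZ$-linearly independent, visible from the companion-matrix form of $A$ in (\ref{e2-3}). This independence bounds the number of weights vanishing on $H$ by $p - 1 - \dim H$ when $\dim H \ge 1$ (and by $p - 1$ for nontrivial finite $H$); combined with $m \ge k$, the inequality then follows by a routine calculation, and Proposition \ref{p3-2} produces the desired extension.
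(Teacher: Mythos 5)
Your proposal is correct and follows essentially the same route as the paper: a constant equivariant map on the orbit $\Ga_p S(V)^{C_p}$ of the maximal isotropy type, followed by an extension over the complement via Proposition \ref{p3-2}, with the decisive inputs being $\dim_\bR U_1^H \le \dim_\bR U_p^H$ for $H\le T^{p-1}$ and the free-stratum count $3p\le 4p-3$ that uses $p\ge 3$. The only (harmless) deviations are that you establish $\dim U_p^H\ge\dim U_1^H$ directly from the weight decomposition of $U_1|_{T^{p-1}}$ rather than via the isomorphism $U_p^{\bZ_p{}^{p-1}}\cong U_1$ of Remark \ref{rem-2}, and you invoke the bound $\dim W_{\Ga_p}(H)\ge p-1-\dim H$ for all $H$, whereas the paper needs the quotient by $W_{\Ga_p}(H)$ only in the case $H=1$.
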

\begin{proof}
Set $V=U_1\oplus U_1$ and $W=U_p\oplus (p-1)V_1$.
Observing that the isotropy subgroup of $(u_1, u_2)\in V$ is 
\[(\Ga_p)_{(u_1, u_2)} = (\Ga_p)_{u_1}\cap (\Ga_p)_{u_2},\]
one can see that any subgroup $K$ conjugate to $C_p$ is a maximal isotropy subgroup of $S(V)$ by Lemma \ref{l3-6}.
Set $K:= W_{\Ga_p}(C_p)$, which is isomorphic to $Z(\Ga_p)$ by Lemma \ref{l3-4}. By formula (\ref{f3-2}), it follows that $S(V)^{C_p} \cong S^3$ is a free $K$-sphere of dimension $3$, and also $S(W)^{C_p}\cong S^1$ has the trivial $K$-action, since $Z(\Ga_p) \le \Ker W$.
Take a constant map 
$f_{C_p}: S(V)^{C_p} \to S(W)^{C_p}.$
 Then $f_{C_p}$ is $K$-equivariant and so
one can obtain a $\Ga_p$-map 
\[f_{(C_p)}: \Ga_pS(V)^{C_p} \to \Ga_pS(W)^{C_p}.\]
Since any isotropy subgroup $H$ of $Y:=S(V)\smallsetminus \Ga_pS(V)^{C_p}$ is not conjugate to $C_p$, it follows from Lemma \ref{l3-6} that $H$ is a closed subgroup of $T^{p-1}$.
Then for any $H \in \Iso(Y)$, we shall verify the condition of Proposition \ref{p3-2}:
\begin{equation}\label{e3-4}
\dim Y^H/W_{\Ga_p}(H) \le \dim S(W)^H.
\end{equation}
\begin{assertion}\label{a1}
It holds that $\dim_\bR U_1^H \le \dim_\bR U_p^H$ for any non-trivial closed subgroup $H\le T^{p-1}$. 
\end{assertion}
Indeed, by the isomorphism $\varphi:\Ga_p/\bZ_p^{p-1}\to \Ga_p$ defined by $\varphi(\ol t a^i)= t^pa^i$, it follows that
$U_p^{\bZ_p^{p-1}}$ is isomorphic to $U_1$, see Remark \ref{rem-2}. Then one sees
\[\dim_\bR U_p^{H} = \dim_\bR U_p^{H\bZ_p^{p-1}} = \dim_\bR (U_p^{\bZ_p^{p-1}})^{H\bZ_p^{p-1}/\bZ_p^{p-1}} = \dim_\bR U_1^{H^p} \ge \dim_\bR U_1^H, \]
where $H^p = \{t^p\in H\,|\,t\in H\} \le H$. Thus Assertion \ref{a1} holds.
\begin{remark}
If $H$ is a subtorus of $T^{p-1}$, then $H^p=H$ and $\dim_\bR U_1^H=\dim_\bR U_p^H$, see Remark \ref{rem-2}.
\end{remark}
Since $\dim_\bR U_1^H \le 2p-2$ for $H\ne 1$, one sees that
\[\dim_\bR V^H \le \dim_\bR U_1^H + \dim_\bR U_p^H \le 2p-2+\dim_\bR U_p^H =\dim_\bR W^H.\]
This inequality shows that the inequality (\ref{e3-4}) holds for $H\ne 1$.
When $H=1$, it follows that
\[\dim Y/\Ga_p = \dim S(V)/\Ga_p  = 3p \le \dim S(W)= 4p-3,\] 
since $p\ge 3$.
Thus there exists a $\Ga_p$-map $f$ extending $f_{(C_p)}$ by Proposition \ref{p3-2}.
\end{proof}

\begin{remark}
In case of $p=2$, it still follows that there exists a $\Ga_2$-map 
\[f: S(U_1\oplus U_1) \to S(U_2\oplus V_1).\]
Indeed, by Lemma \ref{l3-3}, there exits a $\Ga_2$-map $f: S(U'_1\oplus U'_1) \to S(U'_2\oplus V'_1)$
between orthogonal representation spheres. By complexification, one sees that 
$U_k = \bC\otimes U'_k$ and $V_k = \bC\otimes V'_k$, and $f$ induces a $\Ga_2$-map $f_\bC: S(U_1\oplus U_1) \to S(U_2\oplus V_1)$.
\end{remark}

\section{The case of $S^1\times C_p$}
In this section, we shall show that $G = S^1\times C_p$ is an anti-BU-group of types I and 
$\II$, and complete the proof of Proposition \ref{p2-1}.
In this case, since the obstruction to extension of a $G$-map may appear, the proof is more complicated.
As the first step,  using an argument similar to that in \cite{N1}, we shall show the following result.

\begin{proposition}\label{p5-1}
The group $G= S^1\times C_p$ is an anti-BU-group of type \II. Namely, there exists a $G$-map $f :S(V)\to S(W)$ with $\deg f =0$ for some fixed-point-free $G$-representations $V$ and $W$ with the same dimension.
\end{proposition}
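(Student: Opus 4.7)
The plan is to exhibit fixed-point-free $G$-representations $V$ and $W$ of the same dimension together with a $G$-map $f:S(V)\to S(W)$ of Brouwer degree zero, following the finite-group strategy of \cite{N1}.

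The key observation is that $G=S^1\times C_p$ contains the cyclic subgroup $C_{p^2}$ generated by $(\xi_{p^2}^{-1},a)$, and by \cite{N1} this subgroup is not a BU-group of type $\II$: there exist fixed-point-free $C_{p^2}$-representations $V'$, $W'$ of equal dimension admitting a $C_{p^2}$-map of degree zero. I would choose $V$ and $W$ as direct sums of one-dimensional characters $\bC_{(k,l)}$ of $G$ (acting by $(t,a^j)\cdot z=t^k\xi_p^{lj}z$) such that (a)~$V^G=W^G=0$ and $\dim V=\dim W$; (b)~the restrictions to $C_{p^2}\le G$ realize $V'$ and $W'$; and (c)~$\dim V^H\le\dim W^H$ for every isotropy subgroup $H$ arising on $S(V)$.

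With (c) in hand, I would apply equivariant obstruction theory (Proposition~\ref{p3-2}) stratum by stratum, beginning at the $C_{p^2}$-fixed stratum, where I prescribe $f$ to be (an $W_G(C_{p^2})$-equivariant extension of) the degree-zero $C_{p^2}$-map supplied by \cite{N1}, and then extending across cells of lower isotropy using the dimension inequalities. To see that $\deg f=0$, I would restrict the resulting map to the subgroup $C_{p^2}\le G$: the Brouwer degree is invariant under restriction of the group action, and the $C_{p^2}$-homotopy class of $f|_{C_{p^2}}$ is determined by its behaviour on $S(V)^{C_{p^2}}$ (where it was prescribed to have degree zero), so $\deg f=\deg(f|_{C_{p^2}})=0$.

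The hard part will be arranging (b) and (c) simultaneously: the weight data $(k_i,l_i)$ determining $V$ and $W$ as $G$-representations must both restrict correctly to $C_{p^2}$ and satisfy the isotropy-dimension inequalities at all the other subgroups appearing on $S(V)$, in particular at $S^1\times 1$, $1\times C_p$, and the various diagonal finite cyclic subgroups. Balancing these constraints---and verifying that the prescribed degree-zero local behaviour at the $C_{p^2}$-stratum indeed determines the global degree, rather than being overridden by the freedom in extending across free-orbit cells---is the delicate core of the argument, paralleling the degree-congruence bookkeeping of \cite{N1}.
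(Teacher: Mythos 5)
Your overall architecture (build a map by equivariant obstruction theory on the singular strata, then certify that its degree vanishes) matches the paper's, and looking for the degree-zero phenomenon inside a non-elementary-abelian subgroup is a reasonable instinct. But the mechanism you propose for forcing $\deg f=0$ does not work, and the gap is not merely the ``delicate bookkeeping'' you defer at the end --- it is fatal to the route as described. Restricting a $G$-map to the finite subgroup $C_{p^2}=\langle(\xi_{p^2}^{-1},a)\rangle$ can only control the degree up to a congruence: the equivariant homotopy class of $f|_{C_{p^2}}$ is \emph{not} determined by its restriction to $S(V)^{C_{p^2}}$, since extending over free cells in the top dimension changes the degree. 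More precisely, the Euler-class relation $e_{C_{p^2}}(\res W)=(\deg f)\,e_{C_{p^2}}(\res V)$ lives in $H^{*}(BC_{p^2};\bZ)$, which is $p^2$-torsion, so at best one obtains $\deg f\equiv 0 \bmod p^2$, never $\deg f=0$; and this is all that the $C_{p^2}$-counterexample of \cite{N1} can transfer. For the same reason one cannot fall back on Proposition \ref{p2-1}(2): $C_{p^2}$ has infinite index in $S^1\times C_p$, which is exactly why this case needs separate treatment in the paper.

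What actually closes the argument is the circle factor. The paper takes the explicit representations $V=V_{1,0}\oplus V_{1,1}$ and $W=V_{p,0}\oplus V_{0,1}$ (Lemma \ref{l4-2}): a $G$-map $S(V)\to S(W)$ exists because the singular set $S(V)^{>1}=S(V)^{C_p}\amalg S(V)^{C_p'}$ maps to $S(W)$ by $p$-th power maps and the free part has orbit-space dimension $2<3=\dim S(W)$, so the extension is unobstructed. Then (Lemma \ref{l4-4}) \emph{every} $G$-map $S(V)\to S(W)$ has degree zero, because $H^{*}(BG;\bQ)\cong H^{*}(BS^1;\bQ)=\bQ[c]$ is an integral domain in which $e_G(V)=c^2\neq 0$ while $e_G(W)=0$: the summand $V_{0,1}$ restricts trivially to $S^1$, killing its rational Euler class, even though $V_{0,1}$ is nontrivial as a $G$-representation so that $W$ remains fixed-point-free. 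The relation $e_G(W)=(\deg h)\,e_G(V)$ of Proposition \ref{p5-3} then forces $\deg h=0$ exactly, with no congruence ambiguity and no need to prescribe or track the degree during the obstruction-theoretic construction. To salvage your plan, replace ``restrict to $C_{p^2}$'' by ``restrict to $S^1$ and compute rational Euler classes''; that is precisely Lemma \ref{l4-4}.
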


Let $G=S^1\times C_p$ and $a\in C_p$ be a generator of $C_p$ as before. The irreducible unitary $G$-representations are given as follows. 
Let $V_{1,0}$ denote the lift of the $1$-dimensional unitary $S^1$-representation $\ol U_1$ on which $t\in S^1$ acts by $t\cdot z = tz$ for 
$z\in V_{1,0}$. Let $V_{0,1}$ denote the lift of the $1$-dimensional unitary $C_p$-representation $\ol V_1$ on which $a$ acts by $a\cdot z = \xi_pz$ for $z\in V_{0,1}$. Every irreducible unitary $G$-representation is given by $V_{k,l}:=V_{1,0}^{\otimes k}\otimes V_{0,1}^{\otimes l}$, $k\in \bZ$, $l\in \bZ/p$.
We consider $G$-representations $V_{1,0}\oplus V_{1,1}$ and $V_{p,0}\oplus V_{0,1}$.
\begin{lemma}\label{l4-2}
For $V=V_{1,0}\oplus V_{1,1}$ and $W=V_{p,0}\oplus V_{0,1}$, there exists a $G$-map 
$h: S(V) \to S(W)$.
\end{lemma}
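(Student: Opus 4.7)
My plan is to give an explicit $G$-equivariant map $h\colon V\to W$ which does not vanish on $S(V)$, so that $h/\|h\|$ is the required $G$-map. Using coordinates $(z_1,z_2)\in V_{1,0}\oplus V_{1,1}=V$ and $(w_1,w_2)\in V_{p,0}\oplus V_{0,1}=W$, the element $(t,a^k)\in S^1\times C_p$ acts by
\[(t,a^k)\cdot(z_1,z_2)=(tz_1,\,t\xi_p^kz_2),\qquad (t,a^k)\cdot(w_1,w_2)=(t^pw_1,\,\xi_p^kw_2).\]
Equivariance of $h=(h_1,h_2)$ thus amounts to four scalar conditions: $h_1(tz_1,tz_2)=t^ph_1$ and $h_1(z_1,\xi_pz_2)=h_1$, and $h_2(tz_1,tz_2)=h_2$ and $h_2(z_1,\xi_pz_2)=\xi_ph_2$.

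The explicit candidate I would propose is
\[h(z_1,z_2)=\bigl(z_1^p+z_2^p,\ z_2\ol{z_1}\bigr).\]
The first component is $S^1$-homogeneous of weight $p$ and $C_p$-invariant because $\xi_p^p=1$. The second has $S^1$-weight $0$ (one factor of weight $+1$ and one of weight $-1$, using $|t|=1$) and picks up exactly one $\xi_p$ from $z_2$. All four equivariance conditions reduce to one-line monomial checks.

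The one step worth singling out is nonvanishing. Suppose $h(z_1,z_2)=0$ with $(z_1,z_2)\in S(V)$. Then $z_2\ol{z_1}=0$ forces $z_1=0$ or $z_2=0$; in either case $z_1^p+z_2^p=0$ then forces the remaining coordinate to vanish too, contradicting $|z_1|^2+|z_2|^2=1$. Hence $h/\|h\|\colon S(V)\to S(W)$ is a well-defined continuous $G$-map, finishing the proof.

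The main design issue, rather than a genuine obstacle, is the choice of $h_2$. There is no nonzero holomorphic polynomial of total $S^1$-weight $0$ on $V$, so some anti-holomorphic factor must appear; the sesqui-monomial $z_2\ol{z_1}$ is essentially the minimal $S^1$-invariant expression that transforms correctly under $C_p$, and it is arranged so that its zero locus meets that of $h_1=z_1^p+z_2^p$ only at the origin. Any similar choice (e.g. $z_1^{p-1}\ol{z_2}^{p-1}$) shares a common zero with either $z_1^p$ or $z_2^p$ on the sphere and fails, which is why the specific pair above is used.
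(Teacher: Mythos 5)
Your proof is correct, and it takes a genuinely different route from the paper. The paper's own argument first defines a map only on the singular set $S(V)^{>1}=S(V)^{C_p}\coprod S(V)^{C_p'}$, by $z\mapsto(z^p,0)$ on each fixed-point circle (exactly the restriction of your $h$ to $z_2=0$, resp.\ $z_1=0$), and then extends over the free part by equivariant obstruction theory, using $\dim\bigl(S(V)\smallsetminus S(V)^{>1}\bigr)/G=2<3=\dim S(W)$ so that all obstructions vanish for dimensional reasons. Your explicit formula $h(z_1,z_2)=(z_1^p+z_2^p,\ z_2\overline{z_1})$ short-circuits this: the four equivariance checks are immediate (the anti-holomorphic factor $\overline{z_1}$ is harmless precisely because $|t|=1$ on $S^1$), the nonvanishing argument is airtight, and normalising preserves equivariance since $G$ acts unitarily on $W$. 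What the explicit map buys is elementarity; what the obstruction-theoretic argument buys is that it is the template the paper actually needs later, in Proposition \ref{p4-5}, where a genuine obstruction appears and must be killed by composing with a degree-zero map, so the author has reason to set up the machinery already here. Note also that for the paper's purposes \emph{any} $G$-map $h:S(V)\to S(W)$ suffices, since Lemma \ref{l4-4} shows via Euler classes that every such map has degree $0$; your specific map therefore plugs into the rest of Section 4 without change. One small quibble with your closing design remark: $z_1^{p-1}\overline{z_2}^{\,p-1}$ has the same zero locus $\{z_1=0\}\cup\{z_2=0\}$ as $z_2\overline{z_1}$, so it would in fact also work when paired with $z_1^p+z_2^p$; it only fails against $z_1^p$ or $z_2^p$ taken separately. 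This does not affect the validity of your proof.
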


\begin{proof}
Note that $\Ker V_{1,0}=C_p=\gr{a}$ and $\Ker V_{1,1} = C_p':= \gr{\xi_p^{-1}a}$.
Therefore $\Iso(S(V))$ consists of $C_p$, $C_p'$ and $1$. 
Similarly, $\Ker V_{p,0} = \bZ_p\times C_p$, where $\bZ_p = \gr{\xi_p} \le S^1$ and $\Ker V_{0,1} =S^1$, and $\Iso(S(W))$ consists of $\bZ_p\times C_p$, $S^1$ and $\bZ_p$.
A $G$-map
\[h_{C_p}: S(V)^{C_p}= S(V_{1,0}) \to  S(W)\]
 is defined by $h_{C_p}(z)=(z^p, 0)$ for $z\in S(V_{1,0})$, 
and also a $G$-map
\[h_{C'_p}: S(V)^{C'_p}= S(V_{1,1}) \to  S(W)\]
is defined by $h_{C'_p}(w)=(w^p, 0)$ for $w\in S(V_{1,1})$.
Using these maps, we obtain a $G$-map 
\[h^{>1}: S(V)^{>1}\to S(W),\]
 where $S(V)^{>1}$ is 
the singular set: 
\[S(V)^{>1}:= \{x\in S(V)\,|\,G_x \ne 1\} = S(V)^{C_p}\coprod S(V)^{C'_p}. \]
Since $G$ acts freely on $S(V)\smallsetminus S(V)^{>1}$ and $\dim (S(V)\smallsetminus S(V)^{>1})/G = 2< \dim S(W) =3$, there exists a $G$-map $h: S(V)\to S(W)$ extending $h^{>1}$ by Proposition \ref{p3-2}.
\end{proof}

Next we shall show $\deg h =0$ for any $G$-map $h$ as above and finish the proof of Proposition \ref{p5-1}. To do that, we use the Euler classes of an oriented orthogonal representation $V$, i.e, the $G$-action on $V$ is orientation preserving under a given orientation on $V$. 
A unitary $G$-representation $V$ has a canonical orientation given by the complex structure of $V$ and oriented as an orthogonal $G$-representation.
Generally, the Euler class 
\[e_G(V)\in H^{n}(BG, R)\]
 of an oriented orthogonal representation $V$ of dimension $n$ is defined to be the Euler class of the associated vector bundle
$\pi: EG\times_GV \to BG$ over the classifying space $BG$. 
In case of a unitary $G$-representation $V$, the Euler class $e_G(V)$ coincides with the top Chern class of the associated complex vector bundle. 
Although the coefficient ring $R$ can be taken to be $\bF_p$, $\bZ$ or $\bQ$, etc, 
we here take $R=\bQ$ as coefficients, because the cohomology ring of $BG$ becomes simpler and it is sufficient for our purpose.
A key result is the following special case of the result of \cite{M2}, see also \cite{N1}.
\begin{proposition}[(\cite{M2}, \cite{N1})]\label{p5-3}
Let $G$ be a compact Lie group and $V$, $W$ fixed-point-free, oriented orthogonal $G$-representations with the same dimension $n$.
Suppose that there exists a $G$-map $h: S(V)\to S(W)$. Then
\[e_G(W) = (\deg h)e_G(V) \in H^{n}(BG; R).\] 
In particular, under $R=\bQ$, if $e_G(V)\ne 0$ and $e_G(W)= 0$, then $\deg h =0$. 
\end{proposition}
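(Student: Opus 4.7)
The plan is to realise both $e_G(V)$ and $e_G(W)$ as restrictions of equivariant Thom classes along the zero section of the Borel construction, and then to identify $\deg h$ as the scalar connecting these Thom classes via the Thom isomorphism over $BG$. First I would extend $h:S(V)\to S(W)$ radially to a $G$-equivariant map of pairs $\bar h:(D(V),S(V))\to(D(W),S(W))$ by $\bar h(tv)=th(v)$ for $t\in[0,1]$ and $v\in S(V)$. Applying the Borel construction $EG\times_G(-)$ produces a map of pairs over $BG$
\[\tilde h:(D(E_V),S(E_V))\longrightarrow(D(E_W),S(E_W)),\]
where $E_V=EG\times_G V$ and $E_W=EG\times_G W$ are the associated rank-$n$ vector bundles. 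Since $V$ and $W$ are oriented $G$-representations, these bundles are oriented and carry Thom classes $\tau_V\in H^n(D(E_V),S(E_V);R)$ and $\tau_W\in H^n(D(E_W),S(E_W);R)$ whose pullbacks along the zero section $BG\hookrightarrow D(E_V)$ (resp.\ $D(E_W)$) are by definition $e_G(V)$ (resp.\ $e_G(W)$).

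The second step is to invoke the Thom isomorphism $H^0(BG;R)\xrightarrow{\,\cup\tau_V\,}H^n(D(E_V),S(E_V);R)$, which holds because $BG$ is connected. This forces $\tilde h^{\,*}\tau_W=\alpha\cdot\tau_V$ for a unique scalar $\alpha\in R$. To pin down $\alpha$ I would restrict $\tilde h$ to a single fibre over a basepoint of $BG$: the fibrewise map is $\bar h$ itself, and the restriction of an equivariant Thom class to a fibre is the ordinary Thom class of the fibre representation. Because the induced self-map of one-point compactifications $S^V\to S^W$ has degree $\deg h$, the fibrewise identity reads $\bar h^{\,*}\tau_W^{\mathrm{fib}}=(\deg h)\,\tau_V^{\mathrm{fib}}$, so $\alpha=\deg h$ by naturality of the Thom isomorphism. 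Pulling the relation $\tilde h^{\,*}\tau_W=(\deg h)\tau_V$ back along the zero section then gives $e_G(W)=(\deg h)\,e_G(V)$ in $H^n(BG;R)$, and the ``in particular'' clause is immediate: over $R=\bQ$, if $e_G(V)\ne 0$ then $(\deg h)e_G(V)=0$ forces $\deg h=0$.

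The main technical point is the identification $\alpha=\deg h$, which rests on two compatibilities that must be handled cleanly: that the equivariant Thom class restricts fibrewise to the ordinary Thom class of the representation, and the standard fact that the degree of $h:S(V)\to S(W)$ equals the degree of the induced map on one-point compactifications $D(V)/S(V)\to D(W)/S(W)$. Both are routine but deserve explicit reference in a full write-up. The remaining ingredients, namely the existence and uniqueness of equivariant Thom classes for oriented representations and the Thom isomorphism for orientable fibre bundles over a connected base, are formal.
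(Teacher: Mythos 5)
Your argument is correct. Note, however, that the paper gives no proof of Proposition \ref{p5-3} at all: it is stated with a citation to \cite{M2} and \cite{N1}, so there is no in-paper argument to compare against. What you have written is the standard proof of this multiplicativity property of the equivariant Euler class --- pass to the radial extension $(D(V),S(V))\to(D(W),S(W))$, apply the Borel construction, and compare Thom classes via the Thom isomorphism over $BG$, identifying the scalar by restricting to a fibre; the references cited argue in essentially this way (sometimes phrased through the Gysin sequences of the sphere bundles $S(EG\times_GV)\to BG$, which is equivalent). The two compatibilities you flag (fibrewise restriction of the Thom class, and $\deg$ of $h$ versus $\deg$ of the induced map on $D(V)/S(V)$) are indeed the only points needing care, and both are standard. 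Two cosmetic remarks: the orientability of $EG\times_GV$ uses precisely that $G$ acts orientation-preservingly on $V$, which is what ``oriented orthogonal representation'' means here and is worth saying explicitly; and the hypothesis $V^G=W^G=0$ plays no role in your proof, which is consistent --- the identity $e_G(W)=(\deg h)\,e_G(V)$ does not require it, and it is stated only because that is the setting in which the proposition is applied.
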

Now we return to the case of $G=S^1\times C_p$. The next lemma shows Proposition \ref{p5-1}.
\begin{lemma}\label{l4-4}
Let $V=V_{10}\oplus V_{11}$ and $W= V_{p,0}\oplus V_{0,1}$.
For any $G$-map $h: S(V) \to S(W)$, it follows that $\deg h =0$.
\end{lemma}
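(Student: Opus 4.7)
The plan is to apply Proposition \ref{p5-3} with rational coefficients $R=\bQ$. The entire argument hinges on showing $e_G(V)\ne 0$ and $e_G(W)=0$ in $H^*(BG;\bQ)$, whence $(\deg h)\,e_G(V)=e_G(W)=0$ forces $\deg h = 0$.

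First I would identify the cohomology ring. Since $BG=BS^1\times BC_p$ and $H^*(BC_p;\bQ)=\bQ$ is concentrated in degree $0$ (because $|C_p|$ is invertible in $\bQ$), the K\"unneth formula gives
\[H^*(BG;\bQ)\cong H^*(BS^1;\bQ)=\bQ[x],\quad \deg x=2,\]
where $x=c_1$ of the universal $S^1$-line bundle. In particular $H^{\text{odd}}(BG;\bQ)=0$ and multiplication by any nonzero element of $H^2(BG;\bQ)$ is injective on positive degrees.

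Next I would compute the Euler class (= top Chern class) of each irreducible summand $V_{k,l}$. Viewed as a complex line bundle over $BG$, its first Chern class splits as the pullback of $kx\in H^2(BS^1;\bQ)$ plus the pullback of a class in $H^2(BC_p;\bQ)=0$, so
\[e_G(V_{k,l})=kx\in H^2(BG;\bQ).\]
In particular $e_G(V_{1,0})=e_G(V_{1,1})=x$, $e_G(V_{p,0})=px$, and $e_G(V_{0,1})=0$. By the Whitney sum formula,
\[e_G(V)=e_G(V_{1,0})\smile e_G(V_{1,1})=x^2\ne 0,\qquad e_G(W)=e_G(V_{p,0})\smile e_G(V_{0,1})=px\cdot 0=0\]
in $H^4(BG;\bQ)$.

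Finally, Proposition \ref{p5-3} applied to the $G$-map $h:S(V)\to S(W)$ (both spheres being fixed-point-free and oriented as unitary representations of the same complex dimension $2$) yields $(\deg h)\,x^2 = 0$ in $\bQ[x]$, which is only possible if $\deg h=0$. There is no real obstacle here beyond the rational Euler-class computation, which is precisely why passing to $\bQ$-coefficients (killing the torsion contribution of $V_{0,1}$) is essential: over $\bZ$ or $\bF_p$ the class $e_G(V_{0,1})$ is nonzero and the argument would collapse.
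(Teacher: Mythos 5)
Your proposal is correct and follows essentially the same route as the paper: both identify $H^*(BG;\bQ)\cong\bQ[c]$ because $BC_p$ is $\bQ$-acyclic, compute $e_G(V)=c^2\ne 0$ and $e_G(W)=0$ (the latter because $e_G(V_{0,1})$ vanishes rationally), and conclude via Proposition \ref{p5-3}. The only cosmetic difference is that you invoke the K\"unneth formula and the tensor-product formula for first Chern classes, while the paper phrases the same computation via restriction to $S^1$.
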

\begin{proof}
Since $BG\simeq BS^1\times BC_p$ and $BC_p$ is $\bQ$-acyclic, 
it follows that 
\[\text{Res}_{S^1}=i^*: H^*(BG;\bQ) \to H^{*}(BS^1; \bQ)\]
is a graded ring isomorphism, where $i: S^1\to G$ is the natural inclusion.
Since 
$H^{*}(BS^1; \bQ)\cong \bQ[c]$ as graded rings,
where $c$ is the first Chern class of the canonical complex line bundle over $BS^1\simeq \bC P^\infty$, one obtains $H^*(BG;\bQ)\cong \bQ[c]$ as graded rings.   
Clearly $\text{Res}_{S^1}V_{1,0}$ and $\text{Res}_{S^1}V_{1,1}$ are isomorphic to the standard $S^1$-representation $\ol U_1$ whose associate complex vector bundle is isomorphic to the canonical one over $BS^1$, hence $e_G(V) = e_G(V_{10})e_G(V_{11})=c^2 \ne 0$. On the other hand, $\text{Res}_{S^1}V_{0,1}$ is the trivial $S^1$-representation, hence $e_G(V_{0,1})=0$. This implies that $e_G(W) = e_G(V_{p,0})e_G(V_{0,1}) = 0$.
Thus $\deg h =0$ by Proposition \ref{p5-3}.
\end{proof}
The second step is to construct a $G$-map $f:S(V) \to S(W)$ for some fixed-point-free representations $V$, $W$ with $\dim V>\dim W$. We shall prove this using a $G$-map $h$ of degree $0$.  
The goal is to prove the following result.
\begin{proposition}\label{p4-5}
The group $G=S^1\times C_p$ is an anti-BU-group of type I. Namely, there exists a $G$-map $f:S(V) \to S(W)$ for some fixed-point-free representations $V$, $W$ with  $\dim V>\dim W$.
\end{proposition}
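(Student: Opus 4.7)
The strategy is to upgrade the degree-zero $G$-map $h : S(V) \to S(W)$ of Lemma \ref{l4-4} (where $V = V_{1,0} \oplus V_{1,1}$, $W = V_{p,0} \oplus V_{0,1}$) to a $G$-map whose source strictly dominates the target in dimension. Set $\tilde V := V \oplus V_{0,1}$; this is fixed-point-free since $V^G = V_{0,1}^G = 0$, and $\dim \tilde V = 6 > 4 = \dim W$. I shall construct a $G$-map $f : S(\tilde V) \to S(W)$, which then witnesses that $G$ is an anti-BU-group of type I.

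Write $S(\tilde V) = S(V) * S(V_{0,1})$ as the $G$-join, so that $S(V)$ sits in $S(\tilde V)$ as the equator (vectors with zero $V_{0,1}$-component) and $S(V_{0,1})$ as a polar subsphere. Define $f$ on the equator to coincide with $h$, and on the polar subsphere to be the natural inclusion $\iota : S(V_{0,1}) = S(W)^{S^1} \hookrightarrow S(W)$; both are manifestly $G$-equivariant. A direct isotropy computation shows $\Iso(S(\tilde V))/G = \{1, C_p, C'_p, S^1\}$, with singular strata $S(V_{1,0})$, $S(V_{1,1})$, and $S(V_{0,1})$, all contained in $A := S(V) \cup S(V_{0,1})$. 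Thus on $A \supseteq S(\tilde V)^{>1}$ the map $f$ is fully defined and equivariant on each stratum via the explicit formulas $z \mapsto (z^p, 0)$, $w \mapsto (w^p, 0)$, and the identity, which already appeared in the construction of $h$ and of $\iota$.

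On the complement $S(\tilde V) \setminus A$ the action of $G$ is free, and the orbit space has dimension $\dim S(\tilde V) - \dim G = 4$. Since $S(W) = S^3$ is $2$-connected, all obstructions to equivariantly extending $f$ in dimensions $\le 3$ vanish automatically by Proposition \ref{p3-2} (applied skeleton-by-skeleton to the free part), so that the only potentially obstructive class is a top primary obstruction $\mathfrak{o} \in H^4((S(\tilde V)/G,A/G);\pi_3(S^3)) \cong \bZ$. Note that $G$ acts by complex-linear (hence orientation-preserving) maps on $S(W)$, so the coefficient system on $\pi_3(S^3)=\bZ$ is untwisted.

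The main obstacle is to show $\mathfrak{o} = 0$. Using the join decomposition and the cofibre sequence $A \hookrightarrow S(\tilde V) \to S(\tilde V)/A$, the unique top $G$-cell corresponds, after passing to orbit spaces, to a generator of $H^4(S^5/S^3;\bZ) \cong \bZ$ arising from the equator $S(V)$; a chain-level computation (or, equivalently, naturality with respect to the non-equivariant extension problem of extending a self-map of $S^3$ across a $4$-cell) identifies $\mathfrak{o}$ with $\pm\deg h$. Since $\deg h = 0$ by Lemma \ref{l4-4}, the obstruction vanishes, $f$ extends $G$-equivariantly to all of $S(\tilde V)$, and Proposition \ref{p4-5} follows.
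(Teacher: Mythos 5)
Your construction breaks down at the last step, where you identify the equivariant primary obstruction $\mathfrak{o}$ with $\pm\deg h$. The naturality you invoke compares the equivariant extension problem with the non-equivariant one; it only shows that the image of $\mathfrak{o}$ under the forgetful homomorphism $\fH^{4}_G(S(\wt V),A;\pi_3(S^3))\to H^{4}(S(\wt V),A;\pi_3(S^3))$ equals the non-equivariant obstruction, which is indeed $\pm\deg h$. But that forgetful map is not injective (on the relevant classes it is in fact zero), so the vanishing of its image says nothing about $\mathfrak{o}$. The equivariant obstruction group is $H^{4}(S(\wt V)/G,A/G;\bZ)\cong H^{3}(M;\bZ)\cong\bZ$ with $M=(S(V)\times S(V_{0,1}))/G$ a closed orientable $3$-manifold, and $\deg h$ cannot contribute to it at all: whatever comes from the equator factors through $H^{3}(S(V)/G;\bZ)=H^{3}(S^2;\bZ)=0$.

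Worse, $\mathfrak{o}$ does not vanish, so no $G$-map $S(\wt V)\to S(W)$ exists for your choice of representations. Since $S(\wt V)$ is the double mapping cylinder of $S(V)\leftarrow P\to S(V_{0,1})$ with $P=S(V)\times S(V_{0,1})$, an extension of your $f|_A$ exists if and only if $h\circ \mathrm{pr}_1$ and $\iota\circ \mathrm{pr}_2$ are $G$-homotopic as maps $P\to S(W)$, i.e.\ iff their difference class in $H^{3}(M;\bZ)$ vanishes. Already $S^1$-equivariantly these are sections of the $S^3$-bundle $\bigl(S^3\times_{S^1}S(W)\bigr)\times S^1$ over $S^2\times S^1$: the first is pulled back from $S^2$ and pairs to $0$ with the degree class $\tau$, while the second lands in the subbundle $S^3\times_{S^1}S(V_{0,1})$, whose fundamental class pairs with $\tau$ to $\pm p$ (the Euler number of the line bundle $S^3\times_{S^1}V_{p,0}$). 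So the $S^1$-difference class is $\mp p\neq 0$ and the $G$-difference class is a unit; replacing $h$ by any degree-zero $G$-map and $\iota$ by any $C_p$-map of degree $\equiv 1\bmod p$ changes nothing. This is exactly the difficulty the paper's proof is built to avoid: it takes the larger $V=2V_{1,0}\oplus 2V_{1,1}$, routes the boundary data through an intermediate sphere $S(U)$, and kills the obstruction by post-composing with the degree-zero join $\wt h=h*\mathrm{id}$, using $\ga(\wt h\circ g)=\wt h_{\#}(\ga(g))$ and $\wt h_*=0$ on $\pi_5$ (Lemma \ref{l4-6}). That is naturality in the target, which legitimately exploits $\deg h=0$; your naturality in the group does not.
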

In order to prove this, we again use equivariant obstruction theory \cite[Chapter \II, 3]{tD}. In this case the obstruction may appear; however, the computation of the obstruction class is not easy in general. In order to avoid this difficulty, we use the existence of a $G$-map of degree $0$.
This idea is based on an argument of \cite{Wa}.

First recall the equivariant primary obstruction class. Let $G$ be a compact Lie group.  
Let $X$ a finite $G$-CW complex and $Y$ an $n$-simple and $(n-1)$-connected $G$-space, $n\ge 1$. 
Let $X^{>1}$ be the singular set of $X$ and suppose that there exists a $G$-map $g: X^{>1}\to Y$. Let $X_{(m)}$ be an $m$-skeleton relative to $X^{>1}$, i.e., $X_{(m)}$
is the union of free $i$-cells $G\times e^i$ of $X\smallsetminus X^{>1}$ for $i\le m$ with $X^{>1}$. Since $Y$ is $(n-1)$-connected, there exists a $G$-map $f_{n}: X_{(n)} \to Y$ extending $g$, since the obstructions to extension vanish. In this situation, the equivariant primary obstruction $\ga(g)$ is defined in the equivariant cohomology group $\fH_G^{n+1}(X, X^{>1}; \pi_{n}(Y))$ and there exists a $G$-map $f_n: X_{(n+1)} \to Y$ extending $g$ if and only if $\ga(g)=0$. 

\begin{lemma}\label{l4-6}
Let $Z$ be another $n$-simple and $(n-1)$-connected $G$-space.
\begin{enumerate}
\item 
For any $G$-map $h: Y\to Z$, it follows that $\ga(h\circ g) = h_\#(\ga(g))$, where 
\[h_\#: \fH_G^{n+1}(X, X^{>1}; \pi_{n}(Y))\to \fH_G^{n+1}(X, X^{>1}; \pi_{n}(Z))\]
is the homomorphism induced by $h$. 
\item
If there exists a $G$-map $h: Y\to Z$ such that $h_*=0 : \pi_n(Y)\to \pi_n(Z)$, then there exists a $G$-map $f_{n+1} : X_{(n+1)}\to Z$ extending $h\circ g$. 
\end{enumerate}
\end{lemma}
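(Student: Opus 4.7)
The plan is to prove both parts by exploiting the naturality of the equivariant primary obstruction with respect to coefficient changes induced by $h$. The argument decomposes into two steps: (a) identify, at the cellular cochain level, the obstruction cocycle for $h \circ g$ with the image of the obstruction cocycle for $g$ under $h_*$; and (b) use this identification, together with the hypothesis $h_* = 0$ in (2), to force the primary obstruction for $h \circ g$ to vanish, whereupon an extension exists by the standard criterion.

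For part (1), I would first fix an extension $f_n : X_{(n)} \to Y$ of $g$, which exists because $Y$ is $(n-1)$-connected and every relative cell of $X_{(n)}$ over $X^{>1}$ is free. The class $\ga(g)$ is then represented by an equivariant cellular cocycle $c(f_n) \in C^{n+1}_G(X, X^{>1}; \pi_n(Y))$ whose value on each free equivariant $(n+1)$-cell is the homotopy class in $\pi_n(Y)$ obtained by restricting $f_n$ to the attaching $n$-sphere. Post-composing with $h$ yields an extension $h \circ f_n$ of $h \circ g$ over $X_{(n)}$, and the associated attaching-sphere maps are exactly the post-compositions by $h$ of those for $f_n$. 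Therefore $c(h \circ f_n) = h_* \circ c(f_n)$ at the cochain level, and passing to equivariant cohomology gives the claimed equality $\ga(h \circ g) = h_\#(\ga(g))$.

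For part (2), the hypothesis $h_* = 0 : \pi_n(Y) \to \pi_n(Z)$ makes the induced coefficient homomorphism, and hence the map $h_\#$ between the equivariant cohomology groups, the zero homomorphism. Combined with (1), this forces $\ga(h \circ g) = h_\#(\ga(g)) = 0$ in $\fH^{n+1}_G(X, X^{>1}; \pi_n(Z))$, and the vanishing of the primary obstruction is equivalent, by the standard equivariant extension criterion of \cite[Chapter II, 3]{tD}, to the existence of a $G$-extension $f_{n+1} : X_{(n+1)} \to Z$ of $h \circ g$. The whole argument is essentially formal; the only point that might seem to need attention is the cochain-level naturality in (1), but it is automatic because every cell of $X \smallsetminus X^{>1}$ is free, so the equivariant cellular chain complex of $(X, X^{>1})$ with constant coefficients coincides with the ordinary cellular chain complex of $(X/G, X^{>1}/G)$, on which coefficient homomorphisms are manifestly natural.
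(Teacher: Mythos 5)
Your proposal is correct and follows essentially the same route as the paper: both fix an extension $f_n$ of $g$ over $X_{(n)}$, take $h\circ f_n$ as the extension of $h\circ g$, and observe that the obstruction cocycle for $h\circ g$ is the image under $h_*$ of that for $g$ (the paper phrases the cocycle via the Hurewicz and boundary homomorphisms, you phrase it via attaching spheres, but these are the same description), after which part (2) is immediate. No gaps.
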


\begin{proof}
(1) From \cite[Chapter \II, 3]{tD}, the equivariant primary obstruction $\ga(f)$ is represented by a cocycle
\[c^{n+1}(g): C_{n+1}(X_{(n+1)}, X_{(n)}) \overset{\rho}{\leftarrow} \pi_{n+1}(X_{(n+1)}, X_{(n)})\overset{\partial}{\to} \pi_{n}(X_{(n)}) \overset{f_{n\,*}}{\to} \pi_{n}(Y),\]
where $\rho$ is the Hurewicz homomorphism
\[\rho: \pi_{n+1}(X_{(n+1)}, X_{(n)}) \to H_{n+1}(X_{(n+1)}, X_{(n)};\bZ)=C_{n+1}(X_{(n+1)}, X_{(n)}).\]
Since an extension $f_{n}':  X_{(n)} \to Z$ of $h\circ g$ is given by $f_{n}'= h\circ f_{n}$, the obstruction class $\ga(h\circ g)$ is represented by
\[c^{n+1}(h\circ g): C^{n+1}(X_{(n+1)}, X_{(n)}) \overset{\rho}{\leftarrow} \pi_{n+1}(X_{(n+1)}, X_{(n)})\overset{\partial}{\to} \pi_{n}(X_{(n)}) \overset{(h\circ f_{n})_*}{\longrightarrow} \pi_{n}(Z).\]
Clearly $c^{n+1}(h\circ g) = h_*(c^{n+1}(g))$ and thus 
$\ga(h\circ g)=h_\#(\ga(g))$. 

(2) Since $h_*=0$, the obstruction class $\ga(h\circ g) =h_\#(\ga(g))$ vanishes.
Therefore there exists a $G$-map $f_{n+1} : X_{(n+1)}\to Z$ extending $h\circ g$. 
\end{proof}
\begin{proof}[Proof of Proposition \ref{p4-5}]
Consider $G$-representations
\[V= 2V_{1,0}\oplus 2V_{1,1}, \quad U= V_{1,0}\oplus V_{p,0}\oplus V_{1,1},  \quad W=2V_{p,0}\oplus V_{0,1}.\]
Observing that 
\[V^{C_p} = 2V_{1,0},\quad U^{C_p} = V_{1,0}\oplus V_{p,0},\quad V^{C'_p} = 2V_{1,1},\quad U^{C'_p} = V_{p,0}\oplus V_{1,1},\]
one can define $G$-maps 
\[f^{C_p}: S(V)^{C_p} \to S(U)^{C_p}, (z,w)\mapsto (z, w^p)/\|(z,w^p)\|\]
 and 
 \[f^{C'_p}: S(V)^{C'_p} \to S(U)^{C'_p}, (z,w)\mapsto (z^p, w)/\|(z^p,w)\|.\]
Therefore, there exists a $G$-map $g: S(V)^{>1} \to S(U)$, where 
$S(V)^{>1} = S(V)^{C_p}\coprod S(V)^{C_p'}.$
By Lemmas \ref{l4-2} and \ref{l4-4}, there exists a $G$-map 
\[h: S(V_{1,0}\oplus V_{1,1}) \to S(V_{p,0}\oplus V_{0,1})\]
of degree $0$.
We define a $G$-map $\wt h :S(U)\to S(W)$ by
\[\wt h=h*id: S(U) \cong S(V_{1,0}\oplus V_{1,1})*S(V_{p,0}) \longrightarrow S(V_{p,0}\oplus V_{0,1})*S(V_{p,0}) \cong S(W) \]
where $*$ means join. Then $\wt h_*=0$ on $\pi_5(S(U))$, since $\deg(h*id) =0$.
Since $S(U)$ and $S(W)$ are $4$-connected, it follows from Lemma \ref{l4-6} that
there exists a $G$-map $f: S(V)_{(6)} \to S(W)$ extending $\wt h\circ g$. Since $\dim S(V)/G = 6$,  it follows that $S(V)_{(6)}$ coincides with the whole space $S(V)$. Thus there exists a $G$-map $f:S(V)\to S(W)$. 
\end{proof}
\begin{remark}
In case of $p=2$, by an argument similar to that in Lemma \ref{l3-3}, one can see that there exists an $S^1\times C_2$-map between orthogonal representation spheres as a counterexample of the Borsuk-Ulam property of type I.
\end{remark}

\begin{proof}[Proof of Theorem \ref{t1-2}]
It is already known that statement (3) implies (2), see for example \cite{M0}. Statement (2) implies (1) by Proposition \ref{p3-1}.  The discussion so far shows that if $G$ is neither $C_p{}^p$ nor $T^n$, then $G$ is an anti-BU-group of both types I and \II. In particular, statement (1) implies (3). 
\end{proof}

\noindent Ikumitsu Nagasaki\\
Department of Mathematics\\
Kyoto Prefectural University of Medicine\\
1-5 Shimogamo Hangi-cho\\  
Sakyo-ku 606-0823, Kyoto\\
Japan\\  
email : nagasaki@koto.kpu-m.ac.jp

\end{document}